\documentclass[11pt]{article}

\usepackage[utf8]{inputenc} 
\usepackage{amsmath}
\usepackage{graphicx}
\usepackage{subcaption}
\usepackage[top=30mm, bottom=30mm, left=27mm, right=27mm]{geometry}
\usepackage{amsfonts}
\usepackage{mathtools}
\usepackage{subcaption}
\usepackage{amssymb}
\usepackage{amsthm}
\usepackage{chngpage}
\usepackage{hyperref}
\hypersetup{
	colorlinks=true,
	linkcolor=black,
	citecolor=black,
	filecolor=black,      
	urlcolor=black,
}
\usepackage{enumerate}
\usepackage{makeidx}
\usepackage{bm}
\usepackage{thmtools}
\usepackage{thm-restate}

\makeindex

\makeatletter
\renewenvironment{proof}[1][\proofname] {\par\pushQED{\qed}\normalfont\topsep6\p@\@plus6\p@\relax\trivlist\item[\hskip\labelsep\bfseries#1\@addpunct{.}]\ignorespaces}{\popQED\endtrivlist\@endpefalse}
\makeatother

\newcommand{\ex}{\operatorname{ex}}
\newcommand{\ZZ}{\mathbb{Z}}

\newtheorem{proposition}{Proposition}[section]
\newtheorem{lemma}[proposition]{Lemma}
\newtheorem{theorem}[proposition]{Theorem}

\usepackage{amsthm}

\theoremstyle{definition}

\newtheorem*{remark*}{Remark}
\newtheorem*{theorem*}{Theorem}

\mathcode`l="8000
\begingroup
\makeatletter
\lccode`\~=`\l
\DeclareMathSymbol{\lsb@l}{\mathalpha}{letters}{`l}
\lowercase{\gdef~{\ifnum\the\mathgroup=\m@ne \ell \else \lsb@l \fi}}%
\endgroup

\title{The generalised rainbow Turán problem for cycles}
\author{Barnabás Janzer\thanks{Department of Pure Mathematics and Mathematical Statistics, Centre for Mathematical Sciences, University of Cambridge, Wilberforce Road, Cambridge CB3 0WB, United Kingdom. Email: bkj21@cam.ac.uk}}
\date{\vspace{-21pt}}

\begin{document}
	\maketitle

\begin{abstract}
Given an edge-coloured graph, we say that a subgraph is rainbow if all of its edges have different colours. 
Let $\ex(n,H,$ rainbow-$F)$ denote the maximal number of copies of $H$ that a properly edge-coloured graph on $n$ vertices can contain if it has no rainbow subgraph isomorphic to $F$. We determine the order of magnitude of $\ex(n,C_s,$ rainbow-$C_t)$ for all $s,t$ with $s\not =3$. In particular, we answer a question of Gerbner, Mészáros, Methuku and Palmer by showing that $\ex(n,C_{2k},$ rainbow-$C_{2k})$ is $\Theta(n^{k-1})$ if $k\geq 3$ and $\Theta(n^2)$ if $k=2$. We also determine the order of magnitude of $\ex(n,P_\ell,$ rainbow-$C_{2k})$ for all $k,\ell\geq 2$, where $P_\ell$ denotes the path with $\ell$ edges.

\end{abstract}\vspace{-5pt}	

\section{Introduction}

The problem of estimating the maximal possible size $\ex(n,F)$ of an $F$-free graph on $n$ vertices is one of the most fundamental problems in extremal graph theory. It is a well known fact that $\ex(n,F)/\binom{n}{2}\to 1-1/(r-1)$ as $n\to \infty$ if $F$ has chromatic number $r$, determining the asymptotic behaviour of this function when $F$ is not bipartite. However, much less is known in the bipartite case. See \cite{furedi2013history} for a survey on the topic.

Alon and Shikhelman introduced \cite{alon2016many} the following generalisation of the problem above. Given two graphs $H$ and $F$, let $\ex(n,H,F)$ denote the maximal number of copies of $H$ that an $F$-free graph on $n$ vertices can contain. Note that the usual Turán number $\ex(n,F)$ is the special case $\ex(n,K_2,F)$. This problem has been studied for several different choices of $H$ and $F$, see e.g. \cite{alon2016many,gerbner2017generalized,gishboliner2018generalized}.

Another generalisation of the Turán problem was introduced by Keevash, Mubayi, Sudakov and Verstra\"ete \cite{keevash2007rainbow}. Given an edge-coloured graph, we say that a subgraph is rainbow if all of its edges have different colours. Let $\ex^*(n,F)$ denote the maximal numer of edges that a properly edge-coloured graph on $n$ vertices can have if it contains no rainbow copy of $F$. Note that clearly $\ex(n,F)\leq \ex^*(n,F)$, and in fact $\ex^*(n,F)=\ex(n,F)+o(n^2)$, giving the asymptotic behaviour when $F$ is not bipartite \cite{keevash2007rainbow}. This rainbow Turán problem has been studied for graphs $F$ including paths \cite{johnston2016rainbow,ergemlidze2019rainbow}, cycles \cite{keevash2007rainbow,das2013rainbow} and complete bipartite graphs \cite{keevash2007rainbow}, and for several graphs exact results are also known \cite{keevash2007rainbow}.

A common generalisation was studied by Gerbner, Mészáros, Methuku and Palmer \cite{gerbner2019generalized}. Let $\ex(n,H,\textnormal{rainbow-}F)$ denote the maximal number of copies of $H$ that a properly edge-coloured graph on $n$ vertices can contain if it has no rainbow subgraph isomorphic to $F$. The authors of \cite{gerbner2019generalized} focused mainly on the case $H=F$, and obtained several results, for example when $F$ is a path, cycle or a tree. Concerning cycles, they proved the following theorem.
\begin{theorem}[Gerbner, Mészáros, Methuku, Palmer \cite{gerbner2019generalized}]\label{theorem_GMMPcycle}
	If $k\geq 2$ is an integer, then
	\[\ex(n,C_{2k+1},\textnormal{rainbow-}C_{2k+1})=\Theta(n^{2k-1})\]
	and
	\[\Omega(n^{k-1})\leq \ex(n,C_{2k},\textnormal{rainbow-}C_{2k})\leq O(n^{k}).\]
	Moreover, if $l\geq 2$ is an integer with $l\not =k$, then
	 \[\ex(n,C_{2l},\textnormal{rainbow-}C_{2k})=\Theta(n^l).\] 
\end{theorem}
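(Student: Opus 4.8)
The plan is to establish each of the three statements by matching lower and upper bounds, treating the odd cycle, the off-diagonal even cases $l\neq k$, and the diagonal case $l=k$ with different tools. Throughout, let $G$ be a properly edge-coloured graph on $n$ vertices with no rainbow copy of the forbidden cycle.

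The cleanest ingredient is the lower bound for the off-diagonal even cases when $l<k$. Here I would simply take $G=K_{l,n-l}$ with parts $A$ of size $l$ and $B$ of size $n-l$, equipped with any proper edge-colouring (which exists since $G$ is bipartite). Every cycle of $G$ uses at most $l$ vertices of $A$ and hence has length at most $2l<2k$, so $G$ contains no $C_{2k}$ at all; meanwhile a copy of $C_{2l}$ is obtained by taking all of $A$ together with an ordered $l$-subset of $B$, giving $\Theta(n^l)$ copies. For the remaining even lower bound ($l>k$) the same host graph $K_{l,n-l}$ still has $\Theta(n^l)$ copies of $C_{2l}$, but it now contains many copies of $C_{2k}$, so the work is to exhibit a proper colouring in which every $C_{2k}$ repeats a colour; I would look for an algebraically defined colouring whose structure forces a collision in any cycle that uses fewer than $l$ vertices of $A$. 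For the odd lower bound I would take the blow-up of a path of length $2k$, with groups $\{u\},Y_1,X_1,\dots,Y_k,\{v\}$ joined by complete bipartite layers, together with the single extra edge $uv$. Apart from $uv$ the graph is bipartite, so every odd cycle must use $uv$, and the copies of $C_{2k+1}$ are exactly the length-$2k$ $u$--$v$ paths that thread the groups monotonically, of which there are $\Theta(n^{2k-1})$; again the crux is to colour the layers so that each such path, together with $uv$, is forced to repeat a colour.

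For the upper bounds I would argue by contraposition: an excess of copies of the counted cycle must create a rainbow copy of the forbidden cycle. In the off-diagonal even case with $l>k$ the mechanism I have in mind is that more than $Cn^l$ copies of $C_{2l}$ yield, after averaging over pairs of vertices, a pair $u,v$ joined by very many paths of length $k$; from a large such family I would extract two paths that are simultaneously internally vertex-disjoint, colour-disjoint and rainbow, whose union is then a rainbow $C_{2k}$. For $l<k$, and for the odd case, the situation is reversed---one must build a longer rainbow cycle out of many shorter copies---and I would combine a high-average-degree reduction (an excess of copies forces a dense subgraph) with the known fact that a properly coloured graph of large minimum degree contains long rainbow paths, closing such a path up to the exact required length.

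The main obstacle, common to all the upper bounds, is producing a rainbow cycle of \emph{exactly} the prescribed length $2k$ (respectively $2k+1$) rather than merely some long rainbow cycle: the length must be hit on the nose, and in the odd case one must moreover save the extra factor of $n$ that separates the target $n^{2k-1}$ from the $n^{2k}$ that the crude ``every copy contains a repeated colour'' count produces. On the construction side the analogous obstacle is designing the proper colourings that destroy all rainbow copies of $C_{2k}$ (for $l>k$) or of $C_{2k+1}$ (odd case) while leaving the cycle count of order $n^l$ or $n^{2k-1}$ intact. Finally, the diagonal case $l=k$ is genuinely different: the exponent drops below $n^l$, and I expect only the weaker bounds $\Omega(n^{k-1})$ and $O(n^k)$ to fall out of these methods, with the gap between them left to the finer analysis developed in the rest of the paper.
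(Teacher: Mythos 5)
There is a genuine gap --- in fact several. Your only concrete mechanism for the upper bounds (average over pairs to find $u,v$ joined by many paths of length $k$, then extract two internally disjoint, jointly rainbow $P_k$'s closing into a rainbow $C_{2k}$) fails, and not just for lack of detail: all the paths of length $k$ between a heavy pair may pass through a single bottleneck vertex. Concretely, for $k=3$ join $u$ completely to a set $W$ of $n$ vertices, join every $w\in W$ to a vertex $z$, and join $z$ to $v$: there are $n$ paths of length $3$ from $u$ to $v$, yet the graph contains no $C_6$ at all, so ``many $P_k$'s between a pair'' yields nothing for $k\geq 3$ (it happens to be true for $k=2$, which is why the $C_4$ case feels easy). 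The proof this theorem is quoted from (Gerbner--M\'esz\'aros--Methuku--Palmer; the paper reproduces its key step as Lemma \ref{lemma_pathsoflength2}) is local rather than global: for \emph{every} vertex $a$, the number of paths of length $2$ starting at $a$ is $O(n)$, proved by locating a $(100k)$-ary tree of depth $2k$ inside $G[N(a)\cup N(N(a))]$ and greedily threading a rainbow $C_{2k}$ through $a$, using that in a proper colouring each forbidden colour excludes at most one child at each branching step. Iterating this lemma multiplies path counts by $O(n)$ per two added edges and gives the $O(n^l)$ and $O(n^k)$ bounds; your alternative route via ``long rainbow paths in graphs of large minimum degree, closed up'' never addresses producing a rainbow cycle of \emph{exactly} the required length, which you yourself flag as the unresolved obstacle. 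So none of the three upper bounds is actually established.

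On the lower bounds, only $K_{l,n-l}$ for $l<k$ is complete (and correct). Your other two hosts are provably unsalvageable, not merely uncoloured. In the odd host (path blow-up with complete bipartite layers plus the edge $uv$), \emph{every} proper colouring contains a rainbow $C_{2k+1}$: build the $u$--$v$ path greedily layer by layer, avoiding $c(uv)$ and the $O(k)$ colours used so far; properness at the current vertex (and at $v$ in the final step) excludes at most one edge per forbidden colour, leaving $n-O(k)$ valid continuations at each step. The same greedy threading rules out colouring $K_{l,n-l}$ for $l>k$; indeed already any proper colouring of $K_{2,m}$ with $m\geq 4$ contains a rainbow $C_4$, since the colour pairs $\{c(ux),c(u'x)\}$, $x$ in the large side, would have to be pairwise intersecting with both coordinate maps injective. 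The actual constructions change the host so that a colour repetition is forced structurally: for even $l\neq k$ with $k\geq 3$, blow up alternate vertices of $C_{2l}$ (this graph contains only $C_4$'s and $C_{2l}$'s, hence no $C_{2k}$ at all); for the diagonal cases, replace two layers by perfect matchings all of whose edges carry one and the same colour, so that every cycle winding around the blow-up uses both equal-coloured edges --- exactly the trick in the lower-bound constructions for Theorems \ref{theorem_rainbowcycle} (which also shows the Sidon-set colouring needed when the forbidden cycle is $C_4$) and \ref{theorem_oddodd} in this paper. In short, you correctly located where the difficulties sit, but the two decisive ideas --- the local path-counting lemma for the upper bounds and the forced-repeated-colour hosts for the lower bounds --- are missing, and two of your proposed constructions admit no suitable colouring at all.
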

(Throughout this paper, whenever we use the $\Omega, \Theta$ or $O$ notation, the implied constants may depend, as usual, on the other parameters present, such as $k$ and $l$ above.) The authors of \cite{gerbner2019generalized} asked what the correct order of magnitude is for $\ex(n,C_{2k},\textnormal{rainbow-}C_{2k})$. (They were able to improve the lower bound to $\Omega(n^{3/2})$ when $k=2$ and the upper bound to $O(n^{8/3})$ when $k=3$.) The main aim of this paper is to obtain the following extension of Theorem \ref{theorem_GMMPcycle}.


\begin{theorem}\label{theorem_allcycles}
If $s\geq 4$ and $t\geq 3$ are positive integers, then
\begin{equation*}
\ex(n,C_s,\textnormal{rainbow-}C_t)=
\begin{cases*}
\Theta(n^{s/2}) & if $t=4$\\
\Theta(n^{s/2}) & if $s,t$ are even with $s\not =t$\\
\Theta(n^{s/2-1}) & if $s=t\geq 6$ and $t$ is even\\
\Theta(n^{(s-1)/2}) & if $t\geq 6$ is even and $s$ is odd\\
\Theta(n^{s-2}) & if $s,t$ are odd with $s\leq t$\\
\Theta(n^s) & if $t$ is odd, and $s>t$ or $s$ is even.
\end{cases*}
\end{equation*}
\end{theorem}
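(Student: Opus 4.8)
The plan is to organise the six cases by the parity of the forbidden length $t$, since this parity governs how dense a properly coloured rainbow-$C_t$-free graph can be, and to treat the lower and upper bounds separately; throughout I write $t=2k$ when $t$ is even. The reduction used repeatedly is that deleting one edge of a copy of $C_s$ and choosing an orientation exhibits it as one of $2s$ copies of $P_{s-1}$ whose two endpoints are adjacent, and conversely each such path closes up into a unique $C_s$; hence, up to a constant factor, the number of copies of $C_s$ is the number of copies of $P_{s-1}$ whose endpoints are adjacent. I would therefore first establish the path estimates $\ex(n,P_\ell,\textnormal{rainbow-}C_{2k})$ advertised in the abstract and use them as the engine for the cycle upper bounds, the extra adjacency of the endpoints sometimes saving a further factor.

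For the lower bounds I would exhibit an explicit properly coloured graph in each regime. When $t$ is odd a balanced blow-up of $C_s$ already suffices: it is bipartite when $s$ is even and has odd girth $s$ when $s$ is odd, so in either sub-case of the last line it contains no cycle of the odd length $t$ at all (here $t<s$ when $s$ is odd), while carrying $\Theta(n^s)$ copies of $C_s$ under any proper colouring. For the fifth line ($s\le t$, both odd) I would take a suitably coloured complete bipartite graph and add one edge inside a side: every odd cycle, and in particular every $C_t$, must use this edge, so $C_s$ appears $\Theta(n^{s-2})$ times and the colouring can be arranged so that no path completing the added edge to a $C_t$ is rainbow. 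When $t$ is even, a suitably coloured $K_{s/2,\,n}$ gives $\Theta(n^{s/2})$ copies of $C_s$ (and contains no $C_t$ whatsoever once $t>s$). The construction for $s=t=2k\ge 6$ is a partial blow-up of $C_{2k}$ in which an independent set of $k-1$ vertices is expanded to size $\Theta(n)$; this yields $\Theta(n^{k-1})$ copies, and since exactly two edges of the base cycle survive among the un-blown vertices—which for $k\ge 3$ can be taken disjoint—colouring those two edges identically forces every such copy to repeat a colour, after which the remaining edges are coloured to leave no rainbow $C_{2k}$ anywhere. The odd-$s$, even-$t$ case is a similar partial blow-up giving $\Theta(n^{(s-1)/2})$ copies, and the case $t=4$ with $s$ odd is genuinely different: here a polarity (Erd\H{o}s--R\'enyi) graph has $\Theta(n^{3/2})$ edges, only $O(n)$ copies of $C_4$ (so a few deletions make it rainbow-$C_4$-free), and $\Theta(n^{s/2})$ copies of $C_s$.

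For the upper bounds the trivial estimate that any $n$-vertex graph contains $O(n^s)$ copies of $C_s$ disposes of the last line, so the work lies with even $t$ and with the odd diagonal. For even $t\ge 6$ the generic exponent $\lfloor s/2\rfloor$ of the second and fourth lines follows from the path-counting theorem through the reduction above, its proof resting on the principle that a rainbow-$C_{2k}$-free colouring contains few rainbow paths of length $k$, since two internally disjoint, colour-disjoint such paths between a common pair of endpoints would close into a rainbow $C_{2k}$. The case $t=4$ is the base case and behaves differently: I would first show directly that any two vertices have at most three common neighbours, encoding each common-neighbour $2$-path by the ordered pair of its two colours and observing that the absence of a rainbow $C_4$ forces every two of these pairs to share a coordinate, which more than three distinct pairs cannot do; this codegree bound caps the edges at $O(n^{3/2})$ and, by selecting alternate vertices of the cycle, yields $O(n^{s/2})$ copies of $C_s$ for either parity of $s$. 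The fifth line ($s\le t$ odd) I would derive from the structure of dense rainbow-$C_t$-free colourings (which are nearly bipartite) together with the path estimates, each odd $C_s$ being forced through a small set of edges that violate the bipartition.

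The main obstacle is the diagonal case $s=t=2k\ge 6$, which is precisely the question of Gerbner, M\'esz\'aros, Methuku and Palmer: the path machinery only reproduces the bound $O(n^k)$ of Theorem~\ref{theorem_GMMPcycle}, and the task is to shave a full factor of $n$ down to $O(n^{k-1})$. Here every $C_{2k}$ present is automatically non-rainbow, hence carries a pair of equally coloured (necessarily disjoint) edges, and the heart of the matter is to convert this forced repetition into the missing saving. The natural line of attack is to count the $C_{2k}$'s through the length-$k$ paths joining antipodal pairs of their vertices, writing the count as $\tfrac12\sum_{u,v}\binom{p(u,v)}{2}$ with $p(u,v)$ the number of such paths, and to invoke a `many paths force a rainbow cycle' principle—out of sufficiently many length-$k$ paths between two fixed vertices in a proper colouring, two can be chosen that are rainbow and colour-disjoint, yielding a forbidden rainbow $C_{2k}$. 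The difficulty, and the reason a purely local argument at a single pair of vertices is not enough, is that the cycles being counted are all non-rainbow and their repeated colours may be globally correlated; controlling this requires a weighted or global counting scheme that tracks the repeated colours across the whole family of cycles, and I expect this step to consume the bulk of the argument.
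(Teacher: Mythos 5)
Your outline is sound for the easy regimes, but it contains two genuine failures. First, the case $s\le t$ with both odd is broken on both sides. For the lower bound, no proper colouring of $K_{N,N}$ plus one edge $uv$ inside a side can avoid a rainbow $C_t$: building a $u$--$v$ path of length $t-1$ greedily, each internal vertex has $N-O(t)$ admissible choices avoiding the $O(t)$ used vertices, the $O(t)$ used colours and the colour $c(uv)$ (properness means each forbidden colour excludes at most one candidate at each step, including the final step into $v$), so for large $N$ a rainbow $C_t$ through $uv$ always exists; the colouring you hope to ``arrange'' does not exist. This is precisely why the paper's construction for Theorem \ref{theorem_oddodd} is not ``bipartite plus one edge'' but a blowup of $C_{2l+1}$ in which \emph{two} of the consecutive class-pairs are reduced to perfect matchings all of colour $1$: every odd cycle must wind around the blowup and hence traverse both matchings, repeating colour $1$, while $\Theta(n^{2l-1})$ copies of $C_{2l+1}$ survive. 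For the upper bound, ``nearly bipartite plus forced edges'' cannot reach $O(n^{s-2})$: stability gives bipartiteness only after deleting $o(n^2)$ edges, and even if the exceptional set $E'$ had linear size (the extremal example itself requires $|E'|=\Theta(n)$), fixing an exceptional edge and completing it by a path of length $s-1$ yields only $O(|E'|\cdot n^{s-2})=O(n^{s-1})$, a full factor $n$ too large; moreover the path estimates of Theorem \ref{theorem_pathcycle} are unavailable here, since forbidding a rainbow \emph{odd} cycle places no constraint on bipartite graphs. The paper instead proves Lemma \ref{lemma_oddwlograinbow} (the non-rainbow $C_{2l+1}$'s number $O(n^{2l-1})$ plus the rainbow ones, via a pattern analysis in which repeated colours determine one or two vertices of the cycle for free) and Lemma \ref{lemma_P2odd} (an $O(n)$ bound on $P_2$'s from each vertex, valid under the extra hypothesis that every edge lies in a rainbow $C_{2l+1}$); neither idea appears in your sketch.

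Second, the two results you yourself identify as the bulk of the work --- the upper bound of Theorem \ref{theorem_pathcycle} and the diagonal case $s=t=2k\ge 6$ --- are asserted or deferred rather than proved, and your gestures point away from what actually works. The principle ``two internally disjoint, colour-disjoint rainbow $P_k$'s between common endpoints close into a rainbow $C_{2k}$'' bounds nothing by itself, since the paths being counted may be non-rainbow or pairwise colour-correlated; the real bottleneck is showing that the number of $P_3$'s is $O(n^2)$ for $k\ge 3$, which in the paper occupies the good/bad-pair dichotomy of Lemmas \ref{lemma_badbadpath}--\ref{lemma_goodgoodpath}, culminating in a global double count of $6$-cycles $axyzy'x'a$ with $c(xy)=c(x'y')$ --- roughly the ``weighted global scheme'' you predict but do not supply. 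For the diagonal upper bound, the antipodal count $\tfrac12\sum_{u,v}\binom{p(u,v)}{2}$ is indeed a dead end; the paper instead classifies $2k$-cycles by pattern (which distance-$2$ pairs are good, which colour coincidences occur), uses the $P_3$ lemmas to save a factor of $n$ when some pair $x_{i-3}x_{i-1}$ is bad, and otherwise exploits a coincidence $c(x_1x_2)=c(x_ix_{i+1})$ to determine $x_1$ uniquely and the goodness of $x_1x_{2k-1}$ to pick $x_{2k}$ in $O(1)$ ways, giving $O(n^{k-1})$. Note also that for odd $s$ and even $t\ge 6$ your reduction to $P_{s-1}$'s with adjacent endpoints yields only $O(n^{(s+1)/2})$; the missing half-power is recovered in the paper (Theorem \ref{theorem_oddcycle}) by bounding the number of closures of a path $P$ of length $s-3$ as $e(X_P,Y_P)=O(1+|X_P|+|Y_P|)$ via Lemma \ref{lemma_pathsoflength2}, not by the closure observation alone. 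On the positive side, your $t=4$ codegree argument (the colour pairs of any two common-neighbour paths must cross-share a coordinate, so there are at most three common neighbours) is correct and even sharper than the paper's ``no bad pair'' observation, and your diagonal lower-bound blowup coincides with the paper's construction.
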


For comparison, we mention the order of magnitude of this function in the non-rainbow setting. 
We note that in many cases more precise bounds are known than the ones given below.

\begin{theorem}[Gishboliner, Shapira \cite{gishboliner2018generalized}, Gerbner, Győri, Methuku, Vizer \cite{gerbner2017generalized}]\label{theorem_nonrainbow}
	If $s\geq 4$ and $t\geq 3$ are distinct positive integers, then
	\begin{equation*}
	\ex(n,C_s,C_t)=
	\begin{cases*}
\Theta(n^{s/2}) & if $t=4$\\
\Theta(n^{s/2})	& if $s,t$ are even\\
\Theta(n^{(s-1)/2}) & if $t\geq 6$ is even and $s$ is odd\\
\Theta(n^{(s-1)/2}) & if $s,t$ are odd with $s<t$\\
\Theta(n^{s}) & if $t$ is odd, and $s>t$ or $s$ is even.
	\end{cases*}
	\end{equation*}
\end{theorem}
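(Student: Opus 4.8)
The plan is to prove matching lower and upper bounds, grouping the six cases into two exponent regimes: the \emph{dense} regime $\Theta(n^s)$ (the last line) and the \emph{sparse} regime $\Theta(n^{\lfloor s/2\rfloor})$ or $\Theta(n^{s/2})$ (the first five lines). In the dense regime the upper bound is free, since any $n$-vertex graph has only $O(n^s)$ copies of $C_s$, so there the sole content is a construction. For the dense lower bound (the case $t$ odd with $s$ even or $s>t$), I would take the balanced blow-up $C_s[n/s]$, replacing each vertex of $C_s$ by an independent set of size $n/s$ and each edge by a complete bipartite graph. Choosing one vertex from each class gives $\Theta(n^s)$ copies of $C_s$, and the graph is bipartite when $s$ is even and otherwise has odd-girth exactly $s$; hence it contains no $C_t$ for the odd values of $t$ in the relevant range, as required.

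For the sparse lower bounds I would use two further families. When $t\neq 4$, take a ``bundle'' graph: put hub vertices $h_1,\dots,h_r$ on a short cycle and replace some edges by a \emph{bundle} of $\Theta(n)$ internally disjoint paths of length $2$, leaving the rest as single edges. Taking $r=s/2$ with every edge bundled when $s$ is even, and $r=(s+1)/2$ with exactly one edge left direct when $s$ is odd, the ``once around'' cycle has length exactly $s$, and one checks that the only cycle lengths occurring are $4$ and $s$. Thus the graph is $C_t$-free for every $t\notin\{4,s\}$, and it contains $\prod_i|B_i|=\Theta(n^{\lfloor s/2\rfloor})$ copies of $C_s$. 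When $t=4$ these bundles create forbidden $4$-cycles, so instead I would use a $C_4$-free host: the incidence graph of a projective plane (bipartite, girth $6$) for even $s$, and the Erd\H{o}s--R\'enyi polarity graph (non-bipartite, $C_4$-free, with $\Theta(n^{3/2})$ edges) for odd $s$. A routine count, using that any two vertices have at most one common neighbour, shows these contain $\Theta(n^{s/2})$ copies of $C_s$, which accounts for the extra half-power that appears precisely when $t=4$.

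The upper bound when $t=4$ is clean. Writing a copy of $C_s$ as $v_0v_1\cdots v_{s-1}$ and using that in a $C_4$-free graph every pair of vertices has at most one common neighbour, I would fix the even-position vertices and recover each odd-position vertex as the unique common neighbour of its two cycle-neighbours. When $s$ is even this determines the copy from $s/2$ freely chosen vertices, giving $O(n^{s/2})$; when $s$ is odd one first picks the single ``short'' edge of the cycle ($O(n^{3/2})$ choices, since a $C_4$-free graph has $O(n^{3/2})$ edges) together with $\lfloor s/2\rfloor-1$ further free vertices, again giving $O(n^{s/2})$.

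The genuine difficulty, and the step I expect to be the main obstacle, is the upper bound $O(n^{\lfloor s/2\rfloor})$ in the remaining sparse cases: forbidding an even cycle $C_t$ with $t\geq 6$, or an odd cycle $C_t$ with $t>s$. Here common neighbourhoods can be large, so the $C_4$-style counting fails. The plan is to reduce the count of $C_s$ to a count of short paths, since an even cycle $C_{2k}$ splits into two internally disjoint paths of length $k$ between a pair of antipodal vertices, and an odd cycle $C_{2m+1}$ into a path of length $m$ and one of length $m+1$; thus the number of copies of $C_s$ is governed by $\sum_{u,v}P(u,v)^2$, where $P(u,v)$ counts paths of length about $s/2$ between $u$ and $v$. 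The heart of the matter is then a path-counting lemma showing that $C_t$-freeness forces these counts to be small -- morally, too many paths of a fixed length between a fixed pair yield two internally disjoint ones and hence a forbidden cycle. Making this precise calls for the Bondy--Simonovits circumference and counting machinery, and I expect the regimes $s<t$ and $s>t$ to need somewhat different arguments (in the latter one counts long cycles while forbidding a short one, so one must instead show that a graph with too many copies of $C_s$ contains a short $C_t$ inside a sufficiently dense ball); this is exactly where Theorem~\ref{theorem_nonrainbow} relies on the work of \cite{gishboliner2018generalized} and \cite{gerbner2017generalized}.
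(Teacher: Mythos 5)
You should first note that the paper contains no proof of this statement at all: Theorem \ref{theorem_nonrainbow} is quoted purely for comparison with the rainbow results and is attributed to \cite{gishboliner2018generalized} and \cite{gerbner2017generalized}, so there is no internal argument to match, and your closing move of deferring the hard cases to those same two papers effectively reproduces what the paper does (cite, not prove). Judged as a standalone proof, though, your proposal has a genuine gap, and it sits exactly where you predicted. The parts you do carry out are essentially sound: the blow-up of $C_s$ for the dense regime, the ``theta-like'' bundle graphs (whose only cycle lengths are $4$ and $s$) for the sparse lower bounds, the incidence/polarity graphs for $t=4$, and the $t=4$ upper bound via uniqueness of common neighbours, including the correct $O(n^{3/2})$-edges bookkeeping for odd $s$. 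But the sparse upper bounds --- $t\geq 6$ even, and $t$ odd with $t>s$ --- are the entire content of the theorem, and your guiding heuristic for them is wrong as stated: if a pair $u,v$ has many paths of length about $s/2$ between them, two internally disjoint such paths close into a copy of $C_s$, i.e.\ precisely the cycle being \emph{counted}, not the forbidden $C_t$ (recall $s\neq t$). So $C_t$-freeness does not force $P(u,v)$ to be small --- one can have $\Theta(n)$ paths of length $s/2$ between a fixed pair in a $C_t$-free graph --- and the quantity $\sum_{u,v}P(u,v)^2$ cannot be controlled the way you suggest.

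The route that actually works (and which this paper adapts to the rainbow setting, so you can read it off from Section \ref{section_evencycle}) goes through path counts from a fixed vertex rather than between a fixed pair: the non-rainbow analogue of Lemma \ref{lemma_pathsoflength2} says that in a $C_{2k}$-free graph every vertex $a$ is the starting point of only $O(n)$ paths of length $2$, proved by finding a long path in the bipartite graph between $N(a)$ and $N(N(a))$ and closing it through $a$ into a $C_{2k}$. Iterating gives that the number of paths of length $\ell$ is $O(n^{\lceil(\ell+1)/2\rceil})$, and then the cycle count follows: for even $s$, each $C_s$ contains a Hamilton path of the cycle, so the number of copies of $C_s$ is at most the number of $P_{s-1}$s, which is $O(n^{s/2})$; for odd $s$ one closes a $P_{s-3}$ using the $e(X_P,Y_P)$ trick exactly as in the paper's proof of Theorem \ref{theorem_oddcycle}. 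The remaining case of forbidden odd $C_t$ with $t>s$ does, as you anticipated, require a separate argument in \cite{gishboliner2018generalized}. So your instinct to reduce cycles to paths was right, but the reduction must be anchored at a vertex, not at an antipodal pair, and as submitted the proposal proves the theorem only in the cases where it was already easy.
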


As part of our proof, we will also determine the order of magnitude of the maximal number of paths of length $l$ if there is no rainbow copy of $C_{2k}$ whenever $k,l\geq 2$. (By the path $P_l$ of length $l$ we mean the path with $l$ edges and $l+1$ vertices.) This result is given in the following theorem. Note that the answer is of the same order of magnitude as in the case of the corresponding (non-rainbow) generalised Turán problem \cite{gishboliner2018generalized}, although our proof is rather different. Also, we trivially have $\ex(n,P_l,\textnormal{rainbow-}C_{t})=\Theta(n^{l+1})$ if $t$ is odd.


%

\begin{theorem}\label{theorem_pathcycle}
 If $k,l\geq 2$ are integers, then
 \begin{equation*}
\ex(n,P_l,\textnormal{rainbow-}C_{2k})=
\begin{cases*}
\Theta(n^{\lceil (l+1)/2\rceil}) & if $k\geq 3$ \\
\Theta(n^{l/2+1}) & if $k=2$.\\
\end{cases*}
\end{equation*} 
\end{theorem}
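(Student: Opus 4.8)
The plan is to establish matching upper and lower bounds for $\ex(n,P_l,\textnormal{rainbow-}C_{2k})$. For the \textbf{lower bounds}, I would exhibit explicit properly edge-coloured graphs with no rainbow $C_{2k}$ but many copies of $P_l$. When $k\geq 3$, the target exponent is $\lceil (l+1)/2\rceil$, so for even $l=2m$ one wants $\Theta(n^{m+1})$ and for odd $l=2m+1$ one wants $\Theta(n^{m+1})$ as well. A natural construction is to take a graph whose girth is small (so that short cycles force colour repetitions) — for instance a blow-up of a small structure, or a graph built from disjoint ``books''/cliques on a bounded number of vertices so that \emph{every} $C_{2k}$ is automatically non-rainbow because it must reuse a small colour palette. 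Concretely, I would consider a vertex set partitioned into a bounded ``core'' together with large independent-ish parts, arranged so that any $2k$-cycle is trapped within a region using at most $2k-1$ colours; counting the paths $P_l$ in this construction should give the claimed power of $n$. The $k=2$ case is different (exponent $l/2+1$), reflecting that avoiding rainbow $C_4$ is a much weaker constraint, and here a bipartite-type or incidence-type construction with many $C_4$'s but no rainbow one is likely the right source of paths.

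For the \textbf{upper bounds}, which I expect to be the main obstacle, the key tool should be a structural/counting argument controlling how paths extend in a $C_{2k}$-rainbow-free properly coloured graph. The cleanest approach is induction on $l$: I would count paths by their endpoints and length, showing that the number of ways to extend a fixed $P_{l-1}$ to a $P_l$ is controlled on average. The crucial input must be an estimate of the form: in a properly edge-coloured graph with no rainbow $C_{2k}$, the number of paths between a fixed pair of vertices (or from a fixed vertex) of a given length is bounded. Here the rainbow-$C_{2k}$-freeness should be exploited via the standard paradigm that \emph{many internally disjoint rainbow paths of length $k$ between two vertices would create a rainbow $C_{2k}$} — so one cannot have too many such paths, which caps the count. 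This likely requires a lemma, proved earlier or inline, bounding the number of rainbow $P_k$'s between two vertices (and relating arbitrary paths to rainbow ones, since a shortest path or a greedily recoloured subpath can be taken rainbow).

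The technical heart will be translating ``no rainbow $C_{2k}$'' into a bound on path counts, and this is where the $k=2$ versus $k\geq 3$ dichotomy enters. For $k\geq 3$ one has more room: a $C_{2k}$ is long, so the constraint is global and the resulting per-vertex branching is limited, yielding the exponent $\lceil (l+1)/2\rceil$ via a parity-sensitive count (even-length and odd-length paths behave slightly differently, which is exactly what the ceiling function records). For $k=2$ the forbidden rainbow $C_4$ only restricts pairs of vertices sharing two rainbow-distinct paths of length $2$, a far weaker condition, so the branching is larger and one gains an extra factor, giving $l/2+1$. I would set up the counting so that each step of the path multiplies the count by roughly $n^{1/2}$ on average (from a Cauchy--Schwarz / convexity argument on degrees along rainbow paths), with the endpoints and parity accounting for the additive constant in the exponent. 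The main obstacle is making this averaging rigorous while respecting properness of the colouring, since properness is what prevents the naive $n^l$ bound and must be invoked at every extension step; I would handle this by carefully tracking the set of used colours along a path and bounding the number of extensions avoiding a repeated colour.
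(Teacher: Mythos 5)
Your upper-bound strategy has a genuine gap at its core. The key estimate you posit --- that rainbow-$C_{2k}$-freeness bounds the number of paths of a given length from a fixed vertex, via the paradigm that many internally disjoint rainbow paths of length $k$ between two vertices would force a rainbow $C_{2k}$ --- fails in the form you need. First, for $k\geq 3$ internally disjoint rainbow paths of length $k$ between $u$ and $v$ can all carry one common colour on their middle edges (properness only constrains edges meeting at a vertex), so arbitrarily many such paths need not produce a rainbow $C_{2k}$. Second, and decisively, a per-vertex bound fails already at length $3$: the paper exhibits a properly coloured bipartite graph with no rainbow $C_{2k}$ in which one vertex $a$ lies on $\omega(n)$ paths of length $3$ (take a dense $C_{2k}$-free bipartite graph, attach to each vertex $x$ of one class a pendant vertex $x'$ with all edges $xx'$ sharing a single colour, and join a new vertex $a$ to all the $x'$). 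Hence an induction in which ``each extension step multiplies the count by roughly $n^{1/2}$ on average'' cannot be driven by per-vertex branching bounds. What is true, and what the paper uses, is a per-vertex $O(n)$ bound only for paths of length $2$ (Lemma \ref{lemma_pathsoflength2}, proved by embedding a $(100k)$-ary tree of depth $2k$), together with a \emph{global} $O(n^2)$ bound on the number of $P_3$s when $k\geq 3$. That $P_3$ bound is the technical heart of the theorem and is entirely absent from your proposal: one splits paths $axyz$ according to whether the pairs $ay$ and $xz$ are good or bad (fewer than, or at least, $100k$ common neighbours), proves per-vertex $O(n)$ bounds in the bad--bad and good--bad cases (Lemmas \ref{lemma_badbadpath} and \ref{lemma_goodbadpath}), and in the good--good case (Lemma \ref{lemma_goodgoodpath}) counts $6$-cycles $axyzy'x'a$ with $c(xy)=c(x'y')$ and applies a Cauchy--Schwarz inequality to the path counts $f(a,z)$ --- this is where your averaging intuition is actually implemented, but globally over pairs $(a,z)$, not per extension step. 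With $P_2$ and $P_3$ controlled, the exponent $\lceil(l+1)/2\rceil$ follows by repeated application of Lemma \ref{lemma_pathsoflength2} (a factor $O(n)$ per two added edges); for $k=2$ the odd-length base case is instead $P_1$ with $\ex^*(n,C_4)=\Theta(n^{3/2})$. Note also that your dichotomy runs backwards: the $k=2$ case is the \emph{easier} one, since without a rainbow $C_4$ no bad pairs can exist at all.

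Your lower bounds also need repair: the ``small colour palette'' idea conflicts with properness, since a vertex of degree $\Theta(n)$ must see $\Theta(n)$ distinct colours, so no region containing a $C_{2k}$ can use only $2k-1$ colours. The correct constructions are simpler and avoid the cycle outright rather than merely making it non-rainbow. For $k\geq 3$, take classes $X_1,\dots,X_{l+1}$ with $|X_i|=1$ for even $i$ and $|X_i|=\Theta(n)$ for odd $i$, complete bipartite graphs between consecutive classes, and an arbitrary proper colouring: every cycle in this graph is a $C_4$ through two consecutive singleton classes, so there is no $C_{2k}$ at all, while the number of $P_l$s is $\Theta(n^{\lceil(l+1)/2\rceil})$. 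For $k=2$, a $C_4$-free $d$-regular graph on $\Theta(n)$ vertices with $d=\Theta(n^{1/2})$ gives $\Theta(n^{l/2+1})$ paths, with the rainbow-$C_4$ condition holding vacuously.
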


Note that a path of length $l=1$ is just an edge, so the corresponding generalised rainbow Turán number $\ex(n,P_1,\textnormal{rainbow-}C_{2k})$ is $\ex^*(n,C_{2k})$. The correct order of magnitude of this is unknown for $k\geq 4$, but is conjectured to be $\Theta(n^{1+1/k})$ for all $k$ (and a corresponding lower bound is known) \cite{keevash2007rainbow,das2013rainbow}. We mention that we believe that the most difficult (new) results in this paper are Theorem \ref{theorem_pathcycle} and the closely related $s=t=2k$ case of Theorem \ref{theorem_allcycles}.\medskip


	
Theorem \ref{theorem_allcycles} deals with all cases except when $s=3$. In that case the correct order of magnitude is unknown in general even in the non-rainbow setting, where the following bounds are known. 

\begin{theorem}[Győri, Li \cite{gyHori2012maximum}, Alon, Shikhelman \cite{alon2016many}, Gishboliner, Shapira \cite{gishboliner2018generalized}]\label{theorem_trianglenonrainbow}
For every $k\geq 2$, we have
\[\Omega(\ex(n,\{C_4,C_6,\dots, C_{2k}\}))\leq \ex(n,C_3,C_{2k})\leq O(\ex(n,C_{2k}))\]
and 
\[\Omega(\ex(n,\{C_4,C_6,\dots, C_{2k}\}))\leq \ex(n,C_3,C_{2k+1})\leq O(\ex(n,C_{2k})).\]
\end{theorem}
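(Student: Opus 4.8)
The statement gathers four estimates: an upper and a lower bound for each of $\ex(n,C_3,C_{2k})$ and $\ex(n,C_3,C_{2k+1})$. I would obtain the two lower bounds from a single construction and the two upper bounds from a common ``link'' argument, the odd cases being the delicate ones in each direction.

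For the upper bounds the plan is to count triangles through the neighbourhoods of vertices. If $G$ is $C_{2k}$-free and $v\in V(G)$, then $G[N(v)]$ can contain no path with $2k-2$ edges, since such a path together with $v$ would close into a copy of $C_{2k}$; hence by the Erd\H{o}s--Gallai theorem $e(G[N(v)])\le \tfrac{2k-3}{2}\deg(v)$. As the number of triangles equals $\tfrac13\sum_v e(G[N(v)])$, summing yields $\ex(n,C_3,C_{2k})\le \tfrac{2k-3}{3}e(G)=O(\ex(n,C_{2k}))$. For the $C_{2k+1}$-free case the same idea shows $G[N(v)]$ has no path with $2k-1$ edges, so $e(G[N(v)])\le (k-1)\deg(v)$; but now the crude bound $e(G)\le \ex(n,C_{2k+1})=\Theta(n^2)$ is useless, and the main work is to replace it. I would do this by passing to the subgraph $G_\Delta\subseteq G$ of edges lying in a triangle: every edge counted in $\sum_v e(G[N(v)])$ lies in $G_\Delta$, so the triangle count is $O(e(G_\Delta))$, and it suffices to show $e(G_\Delta)=O(\ex(n,C_{2k}))$.

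The key point is that $G_\Delta$ is almost $C_{2k}$-free. If $u_1\cdots u_{2k}$ were a copy of $C_{2k}$ in $G_\Delta$ and some edge $u_iu_{i+1}$ had a triangle-apex $w\notin\{u_1,\dots,u_{2k}\}$, then replacing $u_iu_{i+1}$ by the path $u_iwu_{i+1}$ would produce a copy of $C_{2k+1}$ in $G$, a contradiction. Thus every copy of $C_{2k}$ in $G_\Delta$ is ``trapped'': all apexes of its edges lie among its own $2k$ vertices, so it sits inside a dense, $K_{2k}$-like cluster. \textbf{I expect bounding the total number of edges carried by such trapped clusters to be the main obstacle}: one must rule out that a dense packing of overlapping copies of $K_{2k}$ gives more than $O(\ex(n,C_{2k}))$ edges while staying $C_{2k+1}$-free, and turning this into a clean bound (for instance by deleting a controlled set of edges to make $G_\Delta$ genuinely $C_{2k}$-free) is where the real content of the odd upper bound lies.

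For the lower bounds I would use one construction for both. First pass to high girth: a maximum-cut bipartite subgraph of an extremal $\{C_4,C_6,\dots,C_{2k}\}$-free graph is a bipartite graph $H$ with parts $X,Y$ and at least $\tfrac12\ex(n,\{C_4,\dots,C_{2k}\})$ edges which, being bipartite with no short even cycle, has girth greater than $2k$. Now build $G$ on $X\cup Y\cup Y'$, where $Y'$ is a disjoint copy of $Y$: join each $y'\in Y'$ to $N_H(y)$, so that $y$ and $y'$ become twins, and add the perfect matching $\{yy':y\in Y\}$. For every edge $ay$ of $H$ the triple $a,y,y'$ is a triangle, so $G$ has at least $e(H)=\Omega(\ex(n,\{C_4,\dots,C_{2k}\}))$ triangles on $O(n)$ vertices. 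It remains to check that $G$ contains no $C_{2k}$ and no $C_{2k+1}$. Consider the projection $\pi$ contracting each matching edge $yy'$ to a single vertex $\widehat y$; then $\pi$ maps $G$ onto $H$, and a cycle of length at most $2k+1$ in $G$ projects to a closed walk of length at most $2k+1$ in $H$. One checks that a simple cycle of $G$ of length exceeding $4$ cannot project to a purely backtracking walk without reusing a vertex of $X$; hence its image must contain a genuine cycle of $H$, necessarily of length at most $2k$, contradicting the girth of $H$ (and recall that $H$, being bipartite, has no odd cycles at all, which disposes of the $C_{2k+1}$ case). The only short cycles the construction actually creates are the triangles $a,y,y'$ and copies of $C_4$ of the form $a,y,b,y'$, both harmless for $k\ge 3$. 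Thus $G$ is simultaneously $C_{2k}$-free and $C_{2k+1}$-free, giving both lower bounds.
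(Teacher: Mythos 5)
The paper does not actually prove this theorem --- it is quoted from Győri--Li, Alon--Shikhelman and Gishboliner--Shapira --- so your attempt has to be judged against the known arguments. Your even upper bound is correct. For the odd upper bound there is a genuine gap, and you flag it yourself: you never bound $e(G_\Delta)$. (A smaller repair first: ``every edge counted in $\sum_v e(G[N(v)])$ lies in $G_\Delta$, so the triangle count is $O(e(G_\Delta))$'' is a non sequitur as written, since $\sum_v e(G[N(v)])$ is a sum of codegrees and a single edge of $G_\Delta$ can lie in arbitrarily many triangles --- books $K_2+\overline{K_t}$ are $C_{2k+1}$-free for $k\geq 2$. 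The fix is your own Erd\H{o}s--Gallai step: every edge of $G[N(v)]$ has both endpoints in $N_{G_\Delta}(v)$, so $e(G[N(v)])\leq (k-1)\deg_{G_\Delta}(v)$ and the triangle count is at most $\tfrac{2(k-1)}{3}e(G_\Delta)$.) The unresolved step, $e(G_\Delta)=O(\ex(n,C_{2k}))$, is where your trapped-cluster plan goes down the wrong road: trapped copies of $C_{2k}$ in $G_\Delta$ genuinely occur (already $K_4$'s in a $C_5$-free graph), and the known proofs never analyse them. The missing device is a bipartition. Take a uniformly random partition $V=V_1\cup V_2$ and let $F_i$ be the set of edges inside $V_i$ that form a triangle with some apex in $V_{3-i}$. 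A copy of $C_{2k}$ in $F_i$ lies entirely inside $V_i$ while each of its edges has an apex in $V_{3-i}$, hence \emph{automatically} off the cycle, so your apex-swap yields a $C_{2k+1}$; thus $e(F_i)\leq \ex(n,C_{2k})$. Since each edge of $G_\Delta$ lands in $F_1\cup F_2$ with probability at least $1/4$, we get $e(G_\Delta)\leq 8\ex(n,C_{2k})$ and the odd upper bound follows --- this is exactly the Alon--Shikhelman/Gishboliner--Shapira argument. Without this (or an equivalent) idea your sketch is not a proof.

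Your lower-bound construction is the standard twin-duplication and the verification is essentially right, though the projection step should be made precise: each $H$-edge $a\hat y$ has only the two preimages $ay$ and $ay'$, so if the projected closed walk contains no cycle of $H$ (ruled out for lengths at most $2k+1$ by girth $\geq 2k+2$), its edges form a forest and each is traversed exactly twice, necessarily via the twin pair; then every $X$-vertex of the cycle has projected degree $1$ and every contracted vertex degree at most $2$, forcing the cycle to be one of the triangles $ayy'$ or quadrilaterals $ayby'$, of length at most $4$. So $G$ has no cycle of length $\ell$ with $5\leq \ell\leq 2k+1$, which gives both lower bounds for $k\geq 3$ and the odd one at $k=2$. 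The genuine defect is that the theorem is stated for all $k\geq 2$, and the case $\ex(n,C_3,C_4)=\Omega(\ex(n,C_4))$ is not covered: your construction unavoidably contains the $4$-cycles $ayby'$ as soon as some $y$ has two neighbours, and no duplication-type construction can avoid this, so the $k=2$ instance of the first inequality needs a separate construction which you do not supply.
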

	
Note that the lower and upper bounds are only known to be of the same order of magnitude when $k\in\{2, 3, 5\}$, in which case both bounds are $\Theta(n^{1+1/k})$. For the rainbow version, we have the following.
\begin{theorem}\label{proposition_triangle}
If $k\geq 2$ is odd then $\ex(n,C_3,\textnormal{rainbow-}C_{2k})=\Omega(n^{1+1/k})$, and if $k$ is even then $\ex(n,C_3,\textnormal{rainbow-}C_{2k+1})=\Omega(n^{1+1/k})$.
Furthermore, for every $k\geq 2$ integer, we have
\[\ex(n,C_3,\textnormal{rainbow-}C_{2k})=O(\ex^*(n,C_{2k}))\]
and 
\begin{align*}\ex(n,C_3,\textnormal{rainbow-}C_{2k})\geq \ex(n,C_3,C_{2k})=\Omega(\ex(n,\{C_4,C_6,\dots,C_{2k}\})),\\
\ex(n,C_3,\textnormal{rainbow-}C_{2k+1})\geq \ex(n,C_3,C_{2k+1})=\Omega(\ex(n,\{C_4,C_6,\dots,C_{2k}\})).
\end{align*}
\end{theorem}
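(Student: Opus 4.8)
The plan is to treat the four assertions separately, in increasing order of difficulty. The two displayed chains of inequalities are the easiest. Given a $C_{2k}$-free graph $G$ on $n$ vertices with the maximum number $\ex(n,C_3,C_{2k})$ of triangles, I would simply fix any proper edge-colouring of $G$ (one exists by Vizing's theorem); since $G$ contains no copy of $C_{2k}$ at all, it contains no rainbow $C_{2k}$, so this coloured graph witnesses $\ex(n,C_3,\textnormal{rainbow-}C_{2k})\geq \ex(n,C_3,C_{2k})$, and the same argument with $C_{2k+1}$ gives the second chain. The equalities $\ex(n,C_3,C_{2k})=\Omega(\ex(n,\{C_4,\dots,C_{2k}\}))$ and its odd analogue are then exactly the content of Theorem~\ref{theorem_trianglenonrainbow}, so nothing further is needed here.

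For the upper bound $\ex(n,C_3,\textnormal{rainbow-}C_{2k})=O(\ex^*(n,C_{2k}))$, observe that any admissible graph $G$ satisfies $e(G)\leq \ex^*(n,C_{2k})$, so it suffices to show that a properly edge-coloured graph with no rainbow $C_{2k}$ has only $O(e(G))$ triangles. Suppose instead that the number of triangles exceeds $C_k\,e(G)$ for a suitable constant $C_k$ depending only on $k$. By a standard cleaning argument---repeatedly deleting any edge lying in fewer than $C_k$ triangles, each deletion destroying fewer than $C_k$ triangles, so that fewer than $C_k\,e(G)$ triangles are destroyed in total---I would pass to a nonempty subgraph $G'$ in which every edge $xy$ has $|N_{G'}(x)\cap N_{G'}(y)|\geq C_k$. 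The heart of the argument is the claim that such a $G'$ must contain a rainbow $C_{2k}$, contradicting the hypothesis. I would prove this by building rainbow cycles of increasing length: a rainbow $C_3$ exists because in a proper colouring the three edges of any triangle are pairwise adjacent and hence differently coloured, and given a rainbow $C_j$ with $3\leq j<2k$ I would take any edge $xy$ of it and expand it into the path $x\,w\,y$ through a common neighbour $w\notin V(C_j)$, producing a rainbow $C_{j+1}$. This works because $xy$ lies in at least $C_k$ triangles, so there are at least $C_k$ choices of $w$; the two new edges $xw,yw$ automatically receive distinct colours (they share $w$), and we only need to avoid the at most $j\leq 2k$ vertices of $C_j$ and the at most $2(j-1)\leq 4k$ choices of $w$ that force $c(xw)$ or $c(yw)$ to repeat a colour on the retained part of the cycle. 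Taking $C_k>6k$ thus suffices, and iterating from $C_3$ up to $C_{2k}$ completes this part.

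Finally, the lower bounds $\ex(n,C_3,\textnormal{rainbow-}C_{2k})=\Omega(n^{1+1/k})$ for odd $k$ and $\ex(n,C_3,\textnormal{rainbow-}C_{2k+1})=\Omega(n^{1+1/k})$ for even $k$ are where the real work lies, and I expect the explicit construction to be the main obstacle. The difficulty is that one cannot obtain these bounds from the non-rainbow theory: by Theorem~\ref{theorem_trianglenonrainbow} the quantity $\ex(n,C_3,C_t)$ is governed by $\ex(n,\{C_4,\dots,C_{2k}\})$, which is only known to be $\Omega(n^{1+1/k})$ for $k\in\{2,3,5\}$ (via generalised polygons), while a sparse random graph with $\Theta(n^{1+1/k})$ edges has only $\Theta(n^{3/k})$ triangles, too few once $k\geq 3$. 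Hence the absence of a rainbow copy, rather than the absence of the cycle itself, must be exploited. The plan is to take a triangle-rich host graph on $n$ vertices with $\Omega(n^{1+1/k})$ triangles---for instance an algebraic or incidence-type graph whose local structure forces many triangles---and to equip it with a carefully chosen proper edge-colouring under which every cycle of the relevant length $2k$ (respectively $2k+1$) is forced to repeat a colour, the parity of the forbidden cycle being dictated by the construction so that its natural closed walks always contain such a repetition. Verifying simultaneously that the colouring is proper, that no cycle of the target length is rainbow, and that $\Omega(n^{1+1/k})$ triangles survive is the delicate balance at the core of this direction, and is where I would expect to spend most of the effort.
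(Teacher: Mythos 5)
Your two displayed chains and the upper bound are fine, but the first assertion of the theorem is not proved. On the parts that work: the chains are handled exactly as in the paper (a $C_{2k}$-free or $C_{2k+1}$-free extremal graph, properly coloured via Vizing, has no rainbow copy either, and Theorem \ref{theorem_trianglenonrainbow} supplies the $\Omega(\ex(n,\{C_4,\dots,C_{2k}\}))$ bounds). Your proof that a properly coloured graph with no rainbow $C_{2k}$ has $O(e(G))$ triangles is correct but genuinely different from the paper's: you clean the graph so every surviving edge lies in at least $C_k>6k$ triangles and then grow a rainbow cycle one vertex at a time, replacing an edge $xy$ of a rainbow $C_j$ by a path $xwy$ through a common neighbour $w$ avoiding $V(C_j)$ and the $\leq 2(j-1)$ choices of $w$ that repeat a retained colour (properness handles $c(xw)\neq c(yw)$ and $c(xw),c(yw)\neq c(xy)$ automatically). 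The paper instead splits triangles according to good/bad pairs: triangles containing a good pair number at most $100k\,e(G)$, and paths $xyz$ with $xz$ bad are bounded by showing the auxiliary graph on $N(y)$ whose edges are bad pairs contains no path of length $k-1$ (a bad pair gives $\geq 100k$ common neighbours, enough to thread a rainbow $C_{2k}$ through such a path), so it has $\leq k\deg(y)$ edges. Both arguments are local-to-global counting and give the same conclusion; yours is arguably more self-contained, the paper's reuses machinery (good/bad pairs) needed elsewhere.

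The genuine gap is the lower bound $\ex(n,C_3,\textnormal{rainbow-}C_{2k})=\Omega(n^{1+1/k})$ for $k$ odd (and the odd-cycle analogue for $k$ even): you correctly identify that it cannot come from the non-rainbow theory and must exploit colour repetition, but you give no construction, only a description of what a construction would have to achieve. This is precisely the content of the assertion, so the proof is incomplete at its most substantial point. The paper's construction is algebraic: take a $B_k$-set $A\subseteq \ZZ_n$ of size $\Theta(n^{1/k})$ (Bose--Chowla), and a tripartite graph on classes $X_1,X_2,Y$, each a copy of $\ZZ_n$, joining $x\in X_1$ to $x\in X_2$ by colour $0$, and $x\in X_i$ to $x+a\in Y$ by colour $(a,i)$ for $a\in A$, $i=1,2$. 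This has $\Theta(n^{1+1/k})$ triangles $\{x,x,x+a\}$. A rainbow $C_{2k}$ uses at most one colour-$0$ edge, and parity rules out exactly one, so it has the form $x_1y_1\dots x_ky_kx_1$ with $y_j\in Y$; writing $a_i=y_i-x_i$ and $b_i=y_i-x_{i+1}$ one gets $\sum a_i=\sum b_i$, whence by the $B_k$-property the $a_i$ are a permutation of the $b_j$. Since $k$ is odd, $|\{x_1,\dots,x_k\}\cap X_1|\neq|\{x_1,\dots,x_k\}\cap X_2|$, which forces some $a_i=b_j$ with $x_i$ and $x_{j+1}$ in the same class $X_l$, so $c(x_iy_i)=(a_i,l)=c(y_jx_{j+1})$ and the cycle is not rainbow after all; the even-$k$, $C_{2k+1}$ case is analogous. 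Note the key trick you were missing: the colour of an $X_i$--$Y$ edge records both the difference $a$ and the side $i$, so the $B_k$ identity plus a parity pigeonhole on the two sides manufactures the repeated colour.
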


Note that, as mentioned before, $\ex^*(n,C_{2k})$ is conjectured \cite{keevash2007rainbow,das2013rainbow} to be $O(n^{1+1/k})$, and $\ex(n,\{C_4,C_6,\dots,C_{2k}\})$ is only known to be $\Omega(n^{1+1/k})$ when $k=2, 3, 5$.\medskip


\section{Forbidden rainbow $C_{2k}$}\label{section_evencycle}
In this section we consider graphs having no rainbow $C_{2k}$ subgraph, and prove the corresponding cases of Theorem \ref{theorem_allcycles}, as well as Theorem \ref{theorem_pathcycle} concerning the number of paths. We will use the following lemma of Gerbner, Mészáros, Methuku and Palmer \cite{gerbner2019generalized}. We also include its proof below for completeness.

\begin{lemma}[Gerbner, Mészáros, Methuku, Palmer \cite{gerbner2019generalized}]\label{lemma_pathsoflength2}
	Let $G$ be a properly edge-coloured graph on $n$ vertices containing no rainbow $C_{2k}$. Then for every $a\in V(G)$, the number of paths $axy$ of length 2 starting at $a$ is $O(n)$.
\end{lemma}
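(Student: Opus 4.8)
The plan is to argue by contradiction: I assume that for some constant $C=C(k)$ the number of length-$2$ paths from $a$ exceeds $Cn$, and from this I construct a rainbow $C_{2k}$ through $a$. Writing the number of such paths as $\sum_{x\in N(a)}(\deg(x)-1)$, the assumption says that the bipartite \emph{cherry} graph $B$ with parts $N(a)$ and $V(G)\setminus\{a\}$ and edge set $\{xy\in E(G):x\in N(a),\,y\neq a\}$ has more than $Cn$ edges. Since $B$ has at most $2n$ vertices, iteratively deleting vertices of $B$-degree at most $C/4$ removes fewer than $(C/2)n$ edges, so the resulting subgraph $B^{*}$ is nonempty and has minimum degree greater than $C/4$ on both sides. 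I will take $C$ large compared with $k$ (say $C=30k$), so that $C/4$ exceeds a small multiple of $k$.

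Next I build, greedily inside $B^{*}$, a rainbow path that alternates between the two sides, $x_0,y_1,x_1,y_2,\dots ,x_{k-1}$, of length $2k-2$. Since the path has an even number of edges its two endpoints $x_0,x_{k-1}$ lie on the $N(a)$-side, which is exactly what I need to close up through $a$. At each extension step the current vertex has more than $C/4$ neighbours in $B^{*}$; I must avoid the at most $2k$ already-used vertices, and, because the colouring is proper, the edges from the current vertex to its neighbours all have distinct colours, so at most $2k+1$ neighbours are excluded by the requirement that the new edge colour be unused and differ from the reserved colour $c(ax_0)$. As $C/4$ beats $4k+1$, a legal extension always exists; I also keep track of used vertices so that the overlap between the two sides never causes a repetition. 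At the final step I must additionally land on a vertex $x_{k-1}\in N(a)$ whose edge $ax_{k-1}$ has a colour not yet used; as the colours $c(ax)$ over $x\in N(a)$ are pairwise distinct, this forbids at most $2k$ further candidates, still within budget.

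Finally I close the path into the cycle $a,x_0,y_1,\dots ,x_{k-1},a$. This has $2k$ distinct vertices and $2k$ edges; the $2k-2$ path edges are rainbow by construction, the two closing edges $ax_0$ and $ax_{k-1}$ differ in colour (they meet at $a$ and the colouring is proper), and neither closing colour occurs on the path because $c(ax_0)$ was reserved throughout and $c(ax_{k-1})$ was checked at the last step. Thus $G$ contains a rainbow $C_{2k}$, a contradiction, giving the bound $O(n)$ with implied constant $C$. The main obstacle is organising the greedy construction so that three competing requirements hold simultaneously: keeping the path rainbow, forcing both endpoints into $N(a)$ (handled by the alternating structure), and ensuring the two edges back to $a$ introduce no colour clash; the degree surplus obtained from the cleaning step is exactly what lets all of these be met at once.
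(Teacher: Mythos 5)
Your proof is correct and is essentially the paper's argument: both proofs extract from the graph between $N(a)$ and the second neighbourhood a substructure offering many choices at every step (your min-degree subgraph obtained by degeneracy cleaning plays exactly the role of the paper's $(100k)$-ary tree of depth $2k$, and your double-cover bookkeeping replaces the paper's preliminary random bipartition), and then greedily build a rainbow $C_{2k}$ through $a$ by avoiding the $O(k)$ used vertices and colours at each extension, closing with the two edges at $a$. The only differences are packaging, not substance.
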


\begin{proof}
	We may assume that $G$ is bipartite, since a random bipartition is expected to preserve a quarter of all paths of length $2$ starting at $a$. Let $X=N(a)$ and $Y=N(N(a))\setminus\{a\}$. Observe that the number of paths $axy$ is $e(X,Y)$, that is, the number of edges between $X$ and $Y$. So it suffices to show that the induced subgraph $G[X\cup Y]$ does not contain a $(100k)$-ary tree of depth $2k$.
	
	Assume that it does contain such a tree. Then it also contains a $(100k)$-ary tree of depth $2k-1$ rooted at some $x_1\in X$. Then we can recursively find distinct vertices $y_1,x_2,y_2,\dots,y_{k-1},x_k$ (with $x_i\in X, y_j\in Y$) such that for all $i$, $x_iy_i, y_ix_{i+1}\in E(G)$, and the colours $c(x_iy_i), c(y_ix_{i+1}), c(ax_i)$ are all distinct. (Here $c$ denotes the edge-colouring.) But then $ax_1y_1x_2y_2\dots y_{k-1}x_ka$ is a rainbow cycle of length $2k$, giving a contradiction.
\end{proof}

We now state explicitly the cases of Theorem \ref{theorem_allcycles} we deal with in the next two subsections.

\begin{theorem}\label{theorem_rainbowcycle}
	Let $k\geq 2$ be an integer. Then
	\begin{equation*}
	\ex(n,C_{2k},\textnormal{rainbow-}C_{2k})=
	\begin{cases*}
\Theta(n^{k-1}) & if $k\geq 3$ \\
\Theta(n^2) & if $k=2$.\\
	\end{cases*}
	\end{equation*}
\end{theorem}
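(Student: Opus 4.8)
The plan is to prove matching upper and lower bounds, treating $k=2$ separately since there the exponent is $2$ rather than $k-1$.

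For the upper bound when $k=2$, I would first show that in any proper edge-colouring of a rainbow-$C_4$-free graph $G$, every pair of vertices has at most three common neighbours. Indeed, fix $a,c$ with common neighbours $w_1,\dots,w_d$ and write $f(i)=c(aw_i)$ and $g(i)=c(cw_i)$; properness makes $f$ and $g$ injective with $f(i)\neq g(i)$, and the $C_4$ given by $aw_icw_j$ fails to be rainbow precisely when $f(i)=g(j)$ or $f(j)=g(i)$. The relation $\{(p,q):f(p)=g(q)\}$ is a partial matching between the indices, since each colour is used at most once by $f$ and once by $g$, so at most $d$ of the $\binom{d}{2}$ unordered pairs can be covered; this forces $\binom{d}{2}\le d$ and hence $d\le 3$. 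Since each pair of vertices then lies in at most $\binom{3}{2}$ copies of $C_4$ as a pair of opposite vertices, we get $\ex(n,C_4,\textnormal{rainbow-}C_4)=O(n^2)$.

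For the upper bound when $k\ge 3$, I would reduce the count of copies of $C_{2k}$ to path counts controlled by Theorem \ref{theorem_pathcycle}. Every $C_{2k}$ contains a copy of $P_{2k-1}$, and there are only $O(n^{k})$ of those, so the whole difficulty is to save the final factor of $n$. To do this I would exploit the absence of a rainbow $C_{2k}$: each copy of $C_{2k}$ has two edges $e,e'$ of equal colour, and properness forces them to be non-adjacent. Charging each copy to such a pair $\{e,e'\}$ and deleting $e,e'$ leaves two sub-paths whose endpoints are pinned down by $e$ and $e'$; the arcs that must be counted are therefore shorter, and summing the resulting path counts (using the bounds and the machinery behind Theorem \ref{theorem_pathcycle}) should produce $O(n^{k-1})$. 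I expect the bookkeeping here, namely controlling simultaneously the number of monochromatic edge pairs and the number of completions of each pair into a cycle, to be the main obstacle; this is presumably why the author flags this case as closely tied to the hardest Theorem \ref{theorem_pathcycle}.

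For the lower bounds I would exhibit explicit properly edge-coloured graphs with no rainbow $C_{2k}$. For $k=2$ I would take a dense graph of bounded codegree in which every pair of vertices has at most two common neighbours, yet a positive proportion of pairs have exactly two, so that it contains $\Omega(n^2)$ copies of $C_4$, and then colour it so that the unique $C_4$ on each such pair repeats a colour. For $k\ge 3$ the copies should come from a product-type structure with about $k-1$ free coordinates, yielding $\Omega(n^{k-1})$ copies of $C_{2k}$. The crux is the colouring, and here a warning is in order: a naive partial blow-up of $C_{2k}$ does not work, because in such a graph the only non-adjacent edges one could hope to repeat lie on independently varying arcs of the cycle and so cannot be forced equal across all copies simultaneously. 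The construction must therefore arrange the free choices together with a bespoke proper colouring so that every copy of $C_{2k}$ is guaranteed a monochromatic non-adjacent pair of edges; designing this colouring is the heart of the lower bound and, as with the upper bound for $k\ge 3$, is where I expect the real work to lie.
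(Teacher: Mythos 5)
Your $k=2$ upper bound is correct, and in fact quantitatively sharper than the paper's: the paper only observes that a rainbow-$C_4$-free colouring admits no bad pair (codegree less than $100k$), while your matching argument between the colour lists $f$ and $g$ pins the codegree down to at most $3$. The genuine gap is in the $k\geq 3$ upper bound, where your charging scheme cannot reach $O(n^{k-1})$. Deleting the monochromatic non-adjacent pair $\{e,e'\}$ from a copy of $C_{2k}$ leaves two paths with $p+q=2k-2$ edges, $p,q\geq 1$; even using the colour coincidence optimally --- choose one path in full, the other minus an endpoint, and recover that endpoint from properness, since at most one edge of a given colour leaves a fixed vertex --- the exponent supplied by Theorem \ref{theorem_pathcycle} is $\lceil p/2\rceil+\lceil (q+1)/2\rceil=k$ for either parity of $p$, i.e.\ no better than the trivial bound $O(n^k)$ obtained by counting copies of $P_{2k-1}$. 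The paper's extra saving comes from a mechanism absent from your sketch: classify cycles by \emph{patterns} (which distance-two pairs $x_ix_{i+2}$ are good, together with which colour coincidences occur), note that in any $C_{2k}$ at least one pair $x_ix_{i+2}$ must be good, and in the pattern where all such pairs are good choose the path $x_2x_3\dots x_{2k-1}$ in $O(n^{k-1})$ ways, recover $x_1$ from the colour coincidence $c(x_1x_2)=c(x_ix_{i+1})$, and close the cycle with only $O(1)$ choices of $x_{2k}$ because $x_1x_{2k-1}$ is good. Patterns containing a bad pair are handled by the rooted bounds of Lemmas \ref{lemma_badbadpath} and \ref{lemma_goodbadpath} (only $O(n)$ paths $axyz$ from a fixed vertex $a$ when $xz$ is bad), with a separate two-case argument for $k=3$ using the observation that an edge extends in at most one way to a $P_2$ whose endpoints form a bad pair. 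Without this good/bad machinery the repeated colour alone does not save the last factor of $n$.

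On the lower bounds, your warning is exactly backwards: the paper's construction for $k\geq 3$ \emph{is} a partial blow-up of $C_{2k}$. The classes $X_1,X_2,X_4,X_5$ and $X_7,X_9,\dots,X_{2k-1}$ are singletons, the classes $X_3,X_6,X_8,\dots,X_{2k}$ have size $n$, consecutive classes are completely joined, and the two unique edges $X_1X_2$ and $X_4X_5$ receive the same colour while all other edges get distinct colours. Every $2k$-cycle winds once around and therefore traverses both fixed edges, so no copy is rainbow, and the $k-1$ large classes give $\Theta(n^{k-1})$ copies on $\Theta(n)$ vertices. The obstruction you describe evaporates once the two same-coloured edges are pinned between singleton classes, so that they do not vary with the copy. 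For $k=2$ you list desiderata (codegree at most $2$, $\Omega(n^2)$ pairs of codegree exactly $2$, a proper colouring making the unique $C_4$ through each such pair non-rainbow) but supply no construction, and producing a proper colouring with no rainbow $C_4$ anywhere is the entire content of this case; the paper achieves it algebraically, partitioning a Sidon set $A\subseteq \ZZ_n$ of size $\Theta(\sqrt n)$ into $A_1\cup A_2$ and building a $4$-partite difference graph in which the two opposite edges of every generic $4$-cycle automatically carry the same colour from $A_1$ (respectively $A_2$), the Sidon property disposing of the degenerate $4$-cycles. In sum, only your $k=2$ upper bound is complete; the $k\geq 3$ upper bound and both lower bounds have genuine gaps.
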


\begin{theorem}\label{theorem_oddcycle}
	If $k, l\geq 2$ are integers, then
	\begin{equation*}\ex(n,C_{2l+1},\textnormal{rainbow-}C_{2k})=
	\begin{cases*}
		\Theta(n^{l}) & if $k\geq 3$ \\
		\Theta(n^{l+1/2}) & if $k=2$.\\
	\end{cases*}
	\end{equation*}
\end{theorem}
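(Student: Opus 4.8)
The plan is to separate lower and upper bounds, and within each to split according to whether $k\ge 3$ or $k=2$.

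\textbf{Lower bounds.} Both follow immediately from the non-rainbow results, using the trivial inequality $\ex(n,H,\textnormal{rainbow-}F)\ge \ex(n,H,F)$: take an $F$-free graph with the maximum number of copies of $H$ and colour its edges properly (possible by Vizing's theorem); it then contains no copy of $F$ at all, hence no rainbow $F$, while the number of copies of $H$ is unchanged. Applying this with $H=C_{2l+1}$, $F=C_{2k}$, Theorem~\ref{theorem_nonrainbow} gives $\ex(n,C_{2l+1},C_{2k})=\Theta(n^{l})$ when $k\ge 3$ (the ``$t\ge 6$ even, $s$ odd'' case, as $(s-1)/2=l$) and $\Theta(n^{l+1/2})$ when $k=2$ (the ``$t=4$'' case, as $s/2=l+1/2$), yielding the required $\Omega(n^{l})$ and $\Omega(n^{l+1/2})$.

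\textbf{A reduction for the upper bounds.} Write $p_j(a,b)$ for the number of paths of length $j$ between $a$ and $b$, so that $p_2(a,b)$ is the codegree of $a$ and $b$. Deleting a vertex $v_0$ from a copy of $C_{2l+1}=v_0v_1\cdots v_{2l}$ leaves a path of length $2l-1$ between $v_1$ and $v_{2l}$, while $v_0$ is a common neighbour of $v_1$ and $v_{2l}$; hence
\[(2l+1)\,\ex(n,C_{2l+1},\textnormal{rainbow-}C_{2k})\le \sum_{a,b} p_{2l-1}(a,b)\,p_2(a,b).\]
By Lemma~\ref{lemma_pathsoflength2} we have $\sum_b p_2(a,b)=O(n)$ for every $a$, and by Theorem~\ref{theorem_pathcycle} the total path count $\sum_{a,b}p_{2l-1}(a,b)$ is $\Theta(n^{l})$ (for $k\ge 3$) or $\Theta(n^{l+1/2})$ (for $k=2$). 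The task is therefore to bound this codegree-weighted path count. For $k=2$ I would use Cauchy--Schwarz:
\[\sum_{a,b} p_{2l-1}(a,b)\,p_2(a,b)\le\Big(\sum_{a,b}p_{2l-1}(a,b)^2\Big)^{1/2}\Big(\sum_{a,b}p_2(a,b)^2\Big)^{1/2}.\]
For the second factor, every codegree is bounded by an absolute constant in a properly coloured graph with no rainbow $C_4$: if $a,b$ had common neighbours $x_1,\dots,x_m$, then with $\alpha_i=c(ax_i)$, $\beta_i=c(bx_i)$, avoiding a rainbow $C_4$ on each pair $x_i,x_j$ forces $\alpha_i=\beta_j$ or $\beta_i=\alpha_j$; since the $\alpha_i$ are distinct this is a tournament-type condition with all out-degrees at most $1$, forcing $m\le 3$. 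Thus $\sum_{a,b}p_2(a,b)^2=O\big(\sum_{a,b}p_2(a,b)\big)=O(n^2)$. For the first factor, a pair of length-$(2l-1)$ paths between $a,b$ is a copy of $C_{4l-2}$ if internally disjoint and a theta-graph on fewer vertices otherwise, so $\sum_{a,b}p_{2l-1}(a,b)^2=O\big(\ex(n,C_{4l-2},\textnormal{rainbow-}C_4)\big)+O(n^{2l-2})=O(n^{2l-1})$ by the $t=4$ cases of Theorem~\ref{theorem_allcycles}. Multiplying gives $O(n^{(2l-1)/2+1})=O(n^{l+1/2})$, as required.

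\textbf{The case $k\ge 3$ (the main obstacle).} Here the same Cauchy--Schwarz yields only $O(n^{l+1/2})$, which is off from the truth $O(n^{l})$ by a factor $n^{1/2}$ — the loss occurring exactly when $p_{2l-1}$ and $p_2$ are positively correlated. To recover the missing factor I would instead pair the strong $L^1$ bound $\sum_b p_2(a,b)=O(n)$ with an $L^\infty$ bound via Hölder,
\[\sum_{a,b}p_{2l-1}(a,b)\,p_2(a,b)\le \sum_a\Big(\max_b p_{2l-1}(a,b)\Big)\Big(\sum_b p_2(a,b)\Big)=O(n)\sum_a \max_b p_{2l-1}(a,b),\]
which reduces the problem to the spreading estimate $\sum_a\max_b p_{2l-1}(a,b)=O(n^{l-1})$, i.e.\ that the length-$(2l-1)$ paths leaving a typical vertex do not concentrate on a few endpoints but reach $\Omega(n)$ of them. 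I expect this to be the crux: it cannot come from the cherry lemma alone, which controls only $p_2$, and I anticipate it needs the inductive peeling that underlies Theorem~\ref{theorem_pathcycle}, run while tracking a designated endpoint. Equivalently, one must rule out the configuration of many long paths ending at a single pair of high codegree by extracting from it a rainbow $C_{2k}$; this is precisely where the full strength of the rainbow hypothesis for $k\ge 3$ enters, and is the step I expect to be hardest.
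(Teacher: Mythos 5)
Your lower bounds and your $k=2$ upper bound are essentially sound, and the $k=2$ case in fact matches the paper: your tournament argument showing that every codegree is at most $3$ in a properly coloured graph with no rainbow $C_4$ is exactly the paper's observation that no pair can be bad, and from it the one-line bound $\sum_{a,b}p_{2l-1}(a,b)\,p_2(a,b)\leq 3\sum_{a,b}p_{2l-1}(a,b)=O(n^{l+1/2})$ (via Theorem \ref{theorem_pathcycle}) finishes immediately — your Cauchy--Schwarz detour is not needed, and is also the one place in that case where you have an unjustified step (the $O(n^{2l-2})$ bound for pairs of length-$(2l-1)$ paths sharing internal vertices is asserted without proof).

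The genuine gap is in the case $k\geq 3$, and it is worse than the "hard step left open" you flag: your reduction target is false. You reduce to the spreading estimate $\sum_a\max_b p_{2l-1}(a,b)=O(n^{l-1})$, but consider the paper's own lower-bound construction for Theorem \ref{theorem_pathcycle}: a blow-up of a path with classes $X_1,\dots,X_{2l}$, where $|X_i|=1$ for $i$ even and $|X_i|=\Theta(n)$ for $i$ odd. This graph contains no $C_{2k}$ at all for $k\geq 3$ (a $2k$-cycle would alternate between odd- and even-indexed classes, hence would need $k\geq 3$ distinct singleton classes, and any closed walk spanning three even-indexed classes must pass through an intermediate one twice), so it is rainbow-$C_{2k}$-free under any proper colouring; yet for every $a\in X_1$ and the unique $b\in X_{2l}$ one has $p_{2l-1}(a,b)=\Theta(n^{l-1})$, so $\sum_a\max_b p_{2l-1}(a,b)=\Omega(n^{l})$ and your H\"older step can give at best $O(n^{l+1})$. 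The rainbow hypothesis simply cannot prevent long paths from concentrating on a single endpoint pair; what it can do is bound \emph{extensions} locally, and that is the idea you are missing. The paper instead fixes a path $P\colon x_1\dots x_{2l-1}$ of length $2l-2$ (deleting \emph{two adjacent} vertices of the cycle rather than one), notes that the number of completions to a $(2l+1)$-cycle is $e(X_P,Y_P)$ with $X_P=N(x_1)\setminus V(P)$, $Y_P=N(x_{2l-1})\setminus V(P)$, and applies Lemma \ref{lemma_pathsoflength2} not to $G$ but to the induced subgraph $G[\{x_1\}\cup X_P\cup Y_P]$ — which is still rainbow-$C_{2k}$-free and has only $O(1+|X_P|+|Y_P|)$ vertices — to get $e(X_P,Y_P)=O(1+|X_P|+|Y_P|)$. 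Since $\sum_P|X_P|$ and $\sum_P|Y_P|$ count extensions of $P$ to paths of length $2l-1$, the total is $O(p_{2l-2}+p_{2l-1})=O(n^l)$ by Theorem \ref{theorem_pathcycle}. This local, amortized application of the $P_2$ lemma is precisely what replaces your lossy max-times-codegree decoupling.
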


For the remainder of this section, unless otherwise stated, we will assume that $k\geq 2$ is an integer, $G$ is a properly edge-coloured graph on $n$ vertices with no rainbow copy of $C_{2k}$, and $c: E(G)\to \ZZ$ denotes the edge-colouring.

\subsection{Paths and even cycles}
In this subsection, we will prove Theorems \ref{theorem_pathcycle} and \ref{theorem_rainbowcycle}. Note that for the upper bounds in Theorems \ref{theorem_pathcycle} and \ref{theorem_rainbowcycle} it suffices to consider bipartite graphs $G$, since a random bipartition is expected to preserve a fixed positive proportion of subgraphs isomorphic to a given bipartite graph, so from now on we assume that $G$ is bipartite. 


In light of Lemma \ref{lemma_pathsoflength2}, to prove the upper bound in Theorem \ref{theorem_pathcycle} for $k\geq 3$, it is sufficient to show that the number of paths of length 3 is $O(n^2)$. Let us say that a pair $x,y$ of vertices of $G$ is bad if $x$ and $y$ have at least $100k$ common neighbours, and it is good otherwise. Then there are three types of paths $axyz$ of length $3$: either $ay$ and $xz$ are both good, or both bad, or one of them is good and the other one is bad. We will treat these cases in separate lemmas. It will be important later that for two of these cases we prove not only that the number of $P_3$s of that type is $O(n^2)$, but also that any vertex is a certain endpoint of $O(n)$ such $P_3$s. However, it is not true that for any vertex $a$ the number of paths $axyz$ of length $3$ starting at $a$ has to be $O(n)$. To see this, take a $C_{2k}$-free bipartite graph $G_0$ on vertex classes $X,Y$ with $|X|=|Y|=n/4$ and $|E(G_0)|=\omega(n)$. For each $x\in X$ add a new vertex $x'$, and join each pair $xx'$ by an edge of the same colour. Finally, add a vertex $a$ and join it to all vertices $x'$. Then the (bipartite) graph we get contains no rainbow $C_{2k}$, and the number of paths of length $3$ starting at $a$ is $|E(G_0)|$.
	


%
%

\begin{lemma}\label{lemma_badbadpath}
	Let $k\geq 3$. For every $a\in V(G)$, the number of paths $axyz$ such that $ay$ and $xz$ are both bad is $O(n)$.
\end{lemma}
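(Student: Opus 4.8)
The plan is to fix $a$ and show that every counted path obeys a colour identity that makes the count collapse to a count of paths of length $2$. Since we may assume $G$ is bipartite, write the two parts as $A,B$ with $a\in A$; then along a path $a-x-y-z$ we have $a,y\in A$ and $x,z\in B$, so the two bad pairs $\{a,y\}$ and $\{x,z\}$ are exactly the two ``diagonals''. Being bad, $\{a,y\}$ has a set $S\subseteq N(a)\cap N(y)$ of at least $100k$ common neighbours (all in $B$), and $\{x,z\}$ has a set $T\subseteq N(x)\cap N(z)$ of at least $100k$ common neighbours (all in $A$).

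The key step is the claim that every counted path satisfies $c(ax)=c(yz)$. To see this, suppose instead $c(ax)\neq c(yz)$ and consider the six-cycle $a-s-y-z-t-x-a$ with $s\in S$ and $t\in T$. Its edges $yz$ and $xa$ are fixed, but we are free to choose $s$ and $t$. Because the colouring is proper, as $s$ ranges over $S$ the colours $c(as)$ are pairwise distinct and so are the colours $c(sy)$; hence all but a bounded number of choices of $s$ avoid having $c(as)$ or $c(sy)$ lie in the two-element set $\{c(yz),c(ax)\}$, and we may further exclude $s\in\{x,z\}$. Fixing such an $s$ and then choosing $t$ in the same way so as to avoid the four already-used colours and the vertices $\{a,y\}$, we obtain, since $|S|,|T|\geq 100k$, a choice for which all six vertices are distinct and all six edge-colours are distinct. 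This is a rainbow $C_6$, and the only coincidence we could not have dodged is $c(ax)=c(yz)$; so for $k=3$ this already contradicts the hypothesis and forces $c(ax)=c(yz)$.

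Granting the identity $c(ax)=c(yz)$, the count is immediate. Every counted path is a triple $(x,y,z)$ with $ax,xy,yz\in E(G)$ and $c(yz)=c(ax)$. Choosing first the length-$2$ path $a-x-y$ fixes the colour $\gamma:=c(ax)$, and then $z$ must be a neighbour of $y$ with $c(yz)=\gamma$; by properness there is at most one such $z$. Hence the number of counted paths is at most the number of paths of length $2$ starting at $a$, which is $O(n)$ by Lemma~\ref{lemma_pathsoflength2}.

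The main obstacle is to establish the colour identity for general $k\geq 4$, where a rainbow $C_6$ is no longer a contradiction and we genuinely need a rainbow $C_{2k}$. The difficulty is structural: a bad pair only supplies length-$2$ connections through its common neighbours — locally a copy of $K_{2,100k}$, whose longest cycle is $C_4$ — so the two bad pairs together yield nothing longer than the $C_6$ above, and the extra $2k-6$ edges must come from the ambient graph. I would therefore try to grow the six-cycle into a rainbow $C_{2k}$ by a greedy/branching argument in the spirit of Lemma~\ref{lemma_pathsoflength2}: replace one of the fixed edges, say $xa$, by a longer rainbow path of the correct (odd) length $2k-5$, built by repeatedly extending into neighbourhoods and using the abundance of common neighbours to dodge the $O(k)$ forbidden colours. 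Guaranteeing that this extension reaches length exactly $2k$ while keeping all vertices distinct and still using the edges $ax$ and $yz$ is the crux, and is where I expect the real work — and possibly an additional structural input beyond local richness — to be required.
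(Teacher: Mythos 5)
Your argument for $k=3$ is correct and self-contained: the six-cycle $a\,s\,y\,z\,t\,x\,a$ through common neighbours $s$ of the bad pair $\{a,y\}$ and $t$ of the bad pair $\{x,z\}$ can indeed be made rainbow except for the single unavoidable coincidence $c(ax)=c(yz)$, and once that identity holds, properness at $y$ pins down $z$ from the $P_2$ $axy$, so Lemma~\ref{lemma_pathsoflength2} finishes the count. For $k=3$ this is a genuinely different (and arguably slicker) route than the paper's.

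However, the lemma is stated for all $k\geq 3$, and for $k\geq 4$ your proof has a genuine gap that cannot be repaired along the lines you sketch: the colour identity $c(ax)=c(yz)$ you hope to force is simply \emph{false} for $k\geq 4$. Concretely, take vertices $a,y$ with common neighbours $x=x_1,x_2,\dots,x_m$ ($m\geq 100k$), take $z$ adjacent to $y$, give $x$ and $z$ common neighbours $t_1,\dots,t_m$, and colour every edge with a distinct colour (this is proper). Each $x_i$ with $i\geq 2$ has degree $2$, so any cycle containing two of them is the $4$-cycle $ax_iyx_{i'}a$; hence at most one such $x_i$ can lie on a longer cycle, and every cycle in this graph has length at most $6$. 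In particular there is no $C_{2k}$ at all for $k\geq 4$, so certainly no rainbow one, yet $axyz$ is a path with $ay$ and $xz$ both bad and $c(ax)\neq c(yz)$. So no amount of ``growing the six-cycle'' using the ambient graph can establish the identity — the ambient graph need not contain any long cycles, which is exactly the structural obstruction you flagged. The paper avoids per-path colour coincidences entirely and argues in aggregate: setting $Y=\{y\in N(N(a))\setminus\{a\}: ay \text{ bad}\}$, it first shows $G[Y\cup N(Y)]$ contains no rainbow path of length $2k-3$ (a rainbow path of length $2k-4$ between two vertices of $Y$ closes into a rainbow $C_{2k}$ through two common neighbours with $a$), whence $\sum_{y\in Y}\deg(y)=O(n)$ by the rainbow Turán bound $\ex^*(n,P_l)=O(n)$; it then bounds, for each $y\in Y$, the number of bad pairs inside $N(y)$ by $k\deg(y)$, via an auxiliary ``bad-pair'' graph on $N(y)$ that contains no path of length $k-1$. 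You would need to replace your identity step by some such global counting to cover $k\geq 4$.
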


\begin{proof}
	Let $Y=\{y\in N(N(a))\setminus\{a\}:ay\textnormal{ is bad}\}$, and let $Z=N(Y)$. Observe that $G[Y\cup Z]$ cannot contain a rainbow path of length $2k-3$. Indeed, if there is such a rainbow path, then there is a rainbow path $y_1z_1\dots y_{k-2}z_{k-2}y_{k-1}$ of length $2k-4$ with $y_i\in Y, z_j\in Z$. Since $ay_1$ and $ay_{k-1}$ are bad, we can choose $b\in N(a)\cap N(y_1)$ and $b'\in N(a)\cap N(y_{k-1})$ such that $aby_1z_1\dots y_{k-2}z_{k-2}y_{k-1}b'a$ is a rainbow $2k$-cycle, giving a contradiction. It follows that $e(Y,Z)=O(n)$, i.e., $\sum_{y\in Y}{\deg_G(y)}=O(n)$. (We are using the fact that for any $l$ we have $\ex^*(n,P_l)=O(n)$. See \cite{ergemlidze2019rainbow} for the best known upper bound.)
	
	For each $y\in Y$, define an auxiliary graph $H_y$ on vertex set $N(y)$ by letting $zz'$ be an edge if and only if $zz'$ is bad. Note that $H_y$ cannot contain a path of length $k-1$. Indeed, if $z_1\dots z_k$ is such a path, then we can choose $b_i\in N_G(z_i)\cap N_G(z_{i+1})$ in such a way that $az_1b_1z_2\dots b_{k-1}z_ka$ is a rainbow $2k$-cycle in $G$, giving a contradiction. It follows that $|E(H_y)|\leq k|H_y|=k\deg_G(y)$. But the number of triples $(x,y,z)$ such that $xyz$ is a path, $xz$ is bad and $y\in Y$ is $2\sum_{y\in Y}|E(H_y)|\leq 2k\sum_{y\in Y}\deg_G(y)=O(n)$. The statement of the lemma follows.
\end{proof}

\begin{lemma}\label{lemma_goodbadpath}
	Let $k\geq 3$. For every $a\in V(G)$, the number of paths $axyz$ such that $ay$ is good and $xz$ is bad is $O(n)$.
\end{lemma}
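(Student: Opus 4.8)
The plan is to fix $a$ and bound the number $T$ of paths $axyz$ with $ay$ good and $xz$ bad by reducing to an edge count in an auxiliary subgraph. The key consequence of $ay$ being good is that, for a fixed edge $yz$ of $G$ with $y$ in the role of the common neighbour of $x$ and $z$, the vertex $x$ must lie in $N(a)\cap N(y)$, and this set has size less than $100k$. Hence each edge of $G$ is the ``$yz$-edge'' of at most $200k$ of our paths (at most $100k$ for each of its two orientations). Writing $G^*$ for the subgraph of $G$ consisting of those edges $yz$ that occur as the $yz$-edge of some such path, we get $T\le 200k\cdot e(G^*)$, so it suffices to show $e(G^*)=O(n)$. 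When $k=3$ this is immediate: by the argument of Lemma~\ref{lemma_badbadpath} the bad-pairs graph $H_y$ on $N(y)$ contains no path of length $k-1=2$, so it is a matching; thus each length-$2$ path $axy$ extends to at most one admissible $z$, and by Lemma~\ref{lemma_pathsoflength2} there are only $O(n)$ such paths $axy$. So assume $k\ge 4$.

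For $k\ge 4$ I will show that $G^*$ contains no rainbow path of some length $\ell=\ell(k)$, which gives $e(G^*)\le \ex^*(n,P_\ell)=O(n)$ by the bound of \cite{ergemlidze2019rainbow}. Put $a$ on one side $A$ of the bipartition. Every vertex of $G^*$ lying on the opposite side $B$ is, by the definition of $G^*$, a bad partner of some anchor $x\in N(a)$ (it is the ``$z$'' of some path $axyz$), and hence is joined to $a$ by a rainbow path $v\,b\,x\,a$ of length $3$, where $b$ may be chosen among the at least $100k$ common neighbours of $v$ and $x$. Given a long rainbow path $P$ in $G^*$, the idea is to cut out a sub-path $Q$ of length $2k-6$ whose two endpoints lie in $B$ (a window of even length has both ends on the same side, and we place it so this side is $B$), and then to close $Q$ into a cycle of length exactly $2k$ by attaching each endpoint to $a$ through one of these length-$3$ routes. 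The resulting closed walk would be a rainbow $C_{2k}$, contradicting the hypothesis on $G$; hence no such $P$ exists.

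The main obstacle is to make this closing rigorous, i.e.\ to guarantee that the cycle obtained is a genuine, vertex-disjoint, rainbow $C_{2k}$: the two length-$3$ connectors must avoid each other and $Q$, and the $O(k)$ new edges must receive colours distinct from one another and from the $2k-6$ colours already used on $Q$. This is exactly where the slack in the definition of a bad pair is decisive. For each connector there are at least $100k$ choices of the intermediate vertex $b$, and since $G$ is properly coloured these produce pairwise distinct colours on the edge at the anchor; thus at most $O(k)$ of them are forbidden, and a valid $b$ avoiding all used vertices and colours remains. The anchors themselves number only boundedly many, and are avoided by choosing $Q$ from among the many disjoint windows available inside the long path $P$ (so $\ell=O(k)$ suffices). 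Once the routing is carried out, we conclude $e(G^*)=O(n)$ and therefore $T=O(n)$, as required.
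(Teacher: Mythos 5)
Your reduction $T\le 200k\cdot e(G^*)$ and your $k=3$ shortcut are fine (for $k=3$ the bad-pairs graph $H_y$ on $N(y)$ contains no path of length $2$, hence is a matching, so each $P_2$ $axy$ extends to at most one admissible $z$ and Lemma \ref{lemma_pathsoflength2} finishes). The gap is in the main step for $k\ge 4$: the claim that $G^*$ contains no long rainbow path is not established by your closing argument. You have genuine freedom only in the intermediate vertices $b,b'$ of the two connectors (there $100k$ candidates beat the $O(k)$ forbidden vertices and colours, as you say), but the anchors $x,x'$ --- and hence the colours $c(ax),c(ax')$ --- are essentially forced: a given endpoint $z$ of the window may have a \emph{unique} $x\in N(a)$ with $xz$ bad extending to a qualifying path. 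Three failure modes are then unaddressed: (i) $x=x'$, so the closed walk $axbz_1\dots zb'x'a$ repeats a vertex (every window's two endpoints could share the same unique anchor); (ii) $x$ or $x'$ may lie on $Q$ --- and since the anchors move with the window, your remark that they ``number only boundedly many'' and can be dodged by choosing among disjoint windows does not work: each window's anchors can lie inside that very window; (iii) most fundamentally, the fixed colour $c(ax)$ may coincide with one of the $2k-6$ colours of $Q$. For instance, if for every $B$-vertex $z$ of the path the unique anchor $x_z$ satisfies $c(ax_z)=$ the colour of the edge of $P$ immediately following $z$, then every window fails through its left endpoint and no sliding helps. Case (i) alone is patchable (close a length-$(2k-4)$ window through a common anchor $x_0$ instead of through $a$, which also removes the fixed-colour edges), but (ii) and (iii) are not, because you have no freedom at all in $x$.

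This forced-anchor problem is precisely what the paper's proof is built around. It defines the set $Z$ of \emph{robust} endpoints --- those $z$ such that for every set $S$ of at most $100k$ vertices there is a qualifying path $axyz$ with $x\notin S$ --- and proves the no-rainbow-$P_{2k-5}$ claim only for $G[Y\cup Z]$: there the freedom to put into $S$ the vertices of $P$, the previously chosen $x$, and the at most $2k$ vertices $x\in N(a)$ whose edge-colour $c(ax)$ appears on $P$ resolves exactly your (i)--(iii). The non-robust endpoints $z\notin Z$, which your $G^*$ includes indiscriminately, require a separate, substantial argument in the paper: for each such $z$ a popular anchor $x_z$ is chosen, the paths are regrouped as $\sum_x e(Y_x,Z_x)$, and Lemma \ref{lemma_pathsoflength2} applied to $G[\{x\}\cup Y_x\cup Z_x]$ together with the identities $\sum_x |Y_x|=O(n)$ and $\sum_x |Z_x|=O(n)$ gives the bound. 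Your proposal has no counterpart to this second half, and it is doubtful that its conclusion even holds for the non-robust edges --- the paper's structure suggests they cannot be handled by forbidding a rainbow path. Repairing your write-up would mean splitting $G^*$ along the robust/non-robust line, at which point you have reproduced the paper's proof.
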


\begin{proof}
	Let $Y=\{y\in N(N(A))\setminus\{a\}: ay\textnormal{ is good}\}$, and let
	\begin{align*}Z=\{z\in V(G): \textnormal{for any set $S\subseteq V(G)$ with $|S|\leq 100k$ there is a path $axyz$ of length 3}\\ \textnormal{such that $x\not\in S$, $ay$ is good and $xz$ is bad}\}.\end{align*}

	Consider first the number of paths $axyz$ with $z\in Z$ such that $ay$ is good (and $xz$ is bad). The number of these is at most $100k\cdot e(Y,Z)$, as after picking $yz$ there are at most $100k$ possible choices for $x$. \vspace{-3pt}
	\subparagraph{Claim.} $G[Y\cup Z]$ cannot contain a rainbow path of length $2k-5$.\vspace{-3pt} \subparagraph{Proof of Claim.} Suppose it contains such a rainbow path. Then it also contains a rainbow path $P$: $z_1y_1\dots z_{k-3}y_{k-3}z_{k-2}$ of length $2k-6$ such that $z_i\in Z, y_j\in Y$. Let
	\[S_1=V(P)\cup\{x\in N(a): c(ax)=c(z_iy_i)\textnormal{ or }c(ax)=c(y_iz_{i+1})\textnormal{ for some $i$}\}.\]
	Then $|S_1|<100k$, so we can pick a $P_3$ $axyz_1$ from $a$ to $z_1$ such that $x\not \in S_1$, $ay$ is good and $xz_1$ is bad. Let $S_2=S_1\cup \{x\}$ and pick a path $ax'y'z_{k-2}$ such that $x'\not \in S_2$, $ay'$ is good and $x'z_{k-2}$ is bad. Then we can pick $y''\in N(x)\cap N(z_1)$ such that $c(xy'')$ and $c(y''z_1)$ are distinct from all $c(z_iy_i), c(y_iz_{i+1}), c(ax),c(ax')$, and $y''$ is distinct from $a$ and each $y_i$. Similarly, we can pick $y'''$ such that $c(xy''')$ and $c(y'''z_1)$ are distinct from all $c(z_iy_i), c(y_iz_{i+1}), c(ax),c(ax'), c(xy''),c(y''z_1)$, and $y'''$ is distinct from $a, y''$ and each $y_i$. Then $axy''z_1y_1z_2\dots y_{k-3}z_{k-2}y'''x'a$ is a rainbow $C_{2k}$, giving a contradiction. The claim follows.\medskip
	
	So $G[Y\cup Z]$ contains no rainbow $P_{2k-5}$, so $e(Y,Z)=O(n)$. So there are $O(n)$ $P_3s$ $axyz$ with $z\in Z$ such that $ay$ is good (and $xz$ is bad).\medskip
	
	Now consider the number of $P_3s$ $axyz$ with $z\not \in Z$ such that $ay$ is good and $xz$ is bad. Given $z\not \in Z$, there is a set $S$ with $|S|\leq 100k$ such that any $P_3$ $axyz$ such that $ay$ is good and $xz$ is bad must have $x\in S$. So for each $z\in Z$ we can pick $x_z\in N(a)$ such that at least a proportion of $1/(100k)$ of all such $P_3$s from $a$ to $z$ go through $x_z$. For each $x\in N(a)$ let $Z_x=\{z\not\in Z: x_z=x\}$. Also let $Y_x=Y\cap N(x)$. Then the number of such $P_3$s starting in $a$ and ending outside $Z$ is at most
	\begin{align*}
	\sum_{z\not \in Z} 100k\cdot  |N(x_z)\cap N(z)\cap Y|&=\sum_{z\not \in Z} 100k\cdot  e(Y_{x_z},\{z\})\\
	&=\sum_{x \in N(a)} 100k\cdot e(Y_x,Z_x).
	\end{align*}
	
	Note that $e(Y_x,Z_x)$ is the number of paths of length $2$ starting at $x$ in the graph $G[\{x\}\cup Y_x\cup Z_x]$. Since that graph contains no rainbow $C_{2k}$, Lemma \ref{lemma_pathsoflength2} gives that $e(Y_x,Z_x)=O(|Y_x|+|Z_x|+1)$. Note, however, that
	\[\sum_{x\in N(a)} |Y_x|=\sum_{y\in Y}|N(y)\cap N(a)|\leq 100k|Y|=O(n)\]
	and
	\[\sum_{x\in N(a)} |Z_x|=\sum_{z\not \in Z}1=O(n).\]
	
	Putting together, we get that the number of such $P_3$s starting at $a$ and ending outside $Z$ is $O(n)$. The statement of the lemma follows.
\end{proof}

\begin{lemma}\label{lemma_goodgoodpath}
	Let $k\geq 3$. The number of paths $axyz$ such that $ay$ and $xz$ are both good is $O(n^2)$.
\end{lemma}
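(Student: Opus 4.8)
The plan is to count good--good $P_3$'s by their central edge. For an ordered edge $(x,y)$ write $p(x,y)=|\{a\in N(x):\{a,y\}\text{ is good}\}|$ for the number of admissible left--extensions of the directed edge $x\to y$; since $x\in N(a)\cap N(y)$ for each such $a$ and $y\in N(x)\cap N(z)$ for an admissible right--extension $z$, every good--good path $axyz$ is counted in the $(x,y)$--term of the sum, so the number of good--good $P_3$'s is at most
$\Sigma:=\sum_{(x,y)\in\vec E(G)}p(x,y)\,p(y,x)$ (each undirected path being counted at most twice, once per orientation of its middle edge). It therefore suffices to prove $\Sigma=O(n^2)$.

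First I would record the aggregate bound $D:=\sum_{(x,y)\in\vec E(G)}p(x,y)=O(n^2)$. This is the number of \emph{good--endpoint $P_2$'s}, i.e.\ paths $uvw$ with $\{u,w\}$ good: a good pair has fewer than $100k$ common neighbours and there are at most $n^2$ pairs, so there are $O(n^2)$ such paths. Lemma~\ref{lemma_pathsoflength2} gives the sharper local statement that for every vertex $v$ the quantity $\sum_{u\sim v}p(u,v)$ (the number of good--endpoint $P_2$'s starting at $v$) is $O(n)$; in particular $p(x,y)\le\min\bigl(\deg(x),\,O(n)\bigr)$ for every edge.

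Next I would fix a constant $C=C(k)$ and call an edge $\{x,y\}$ \emph{light} if $\min(p(x,y),p(y,x))<C$ and \emph{heavy} otherwise. On a light edge $p(x,y)p(y,x)\le C\bigl(p(x,y)+p(y,x)\bigr)$, so the light edges contribute at most $2C\cdot D=O(n^2)$ to $\Sigma$. This disposes of everything except the heavy edges, on which $p(x,y)\ge C$ and $p(y,x)\ge C$ simultaneously.

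The heavy edges are where the real difficulty lies, and I expect this to be the main obstacle. There is no per--edge or even per--vertex bound available: a double star (one edge $xy$ with $\sim n/2$ pendant leaves at each end) is acyclic, hence contains no rainbow $C_{2k}$, yet already carries $\Theta(n^2)$ good--good $P_3$'s all through its single edge, with $p(x,y)=p(y,x)=\Theta(n)$. Thus the heavy contribution must be controlled globally, using the forbidden rainbow $C_{2k}$ in an essential way; the marginal bounds alone only give $\Sigma=O(n\cdot e(G))$ or $O(n^3)$, both too weak. The approach I would take mirrors Lemmas~\ref{lemma_badbadpath}--\ref{lemma_goodbadpath} and Lemma~\ref{lemma_pathsoflength2}: assuming the heavy contribution exceeds $Kn^2$ for a large constant $K$, I would locate a sufficiently rich branching configuration among the heavy edges --- each heavy vertex offering at least $C$ good branches which, being properly coloured, carry at least $C-100k$ distinct new colours --- and thread a rainbow $C_{2k}$ through it, greedily avoiding at each of the $2k$ steps the fewer than $100k$ already--used colours and vertices. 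The technical heart, and the step I expect to be hardest, is precisely that good pairs have \emph{few} common neighbours, so (unlike the local blow--ups exploited before) the cycle cannot be closed inside a single neighbourhood and must be threaded through many distinct heavy edges. Organising this --- for instance after a dyadic pigeonhole on the scale of $p(y,x)$, separating a ``high--girth--like'' regime, where $C_{2k}$--freeness forces small degrees and a Moore--type estimate already makes the count small, from a ``double--star--like'' regime, controlled by the total pendant/vertex budget --- and then guaranteeing closure of the rainbow cycle, is the crux of the argument.
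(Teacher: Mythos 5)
Your reduction is sound as far as it goes: counting by the middle edge, the bound $D=O(n^2)$ on good-endpoint $P_2$s is correct (each good pair has fewer than $100k$ common neighbours), and the light/heavy split correctly disposes of edges with $\min(p(x,y),p(y,x))<C$. But the proof stops exactly where the lemma's difficulty begins. For the heavy edges you offer only a programme (``locate a rich branching configuration and thread a rainbow $C_{2k}$ through it''), not an argument, and that programme faces an obstruction you yourself name: the pairs in question are \emph{good}, so the greedy threading that worked in Lemma \ref{lemma_pathsoflength2} (and in Lemmas \ref{lemma_badbadpath} and \ref{lemma_goodbadpath}, which exploited \emph{bad} pairs precisely in order to close cycles) has no mechanism for closing a rainbow $C_{2k}$ here --- goodness means few common neighbours, which works against you at the closing step. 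And the marginal information genuinely does not suffice: the constraints $D=O(n^2)$ and $p=O(n)$ are consistent with, say, $\Theta(n^{3/2})$ heavy edges each carrying $p\approx\sqrt{n}$ on both sides, which would give $\Sigma\approx n^{5/2}$; some further global input is needed, and your sketch does not supply it.

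The paper's actual proof runs through a different, essentially non-greedy mechanism that your plan lacks. It parametrises good--good $P_3$s by their \emph{endpoint} pair $(a,z)$ rather than their middle edge, and splits according to whether the family of such rainbow paths from $a$ to $z$ is robustly colourful. If for every colour set $S$ of size at most $(100k)^2$ there is such a rainbow path avoiding $S$ (the set $W$), then $G[Y_a,Z_a]$ can contain no rainbow $P_{2k-5}$, whence $e(Y_a,Z_a)=O(n)$ and these pairs contribute $O(n^2)$ paths. Otherwise some single colour $c_{az}$ occurs on a constant fraction of the $f(a,z)$ paths, necessarily (up to $O(1)$ exceptions) on the middle edge and on internally vertex-disjoint paths; the paper then double counts $6$-cycles $axyzy'x'a$ with $c(xy)=c(x'y')$ and all distance-$2$ pairs good: at least $\binom{N_{az}}{2}-2kN_{az}$ per pair from below (bad pairs among the $x_i$, resp.\ the $y_i$, span no path of length $k-1$, hence at most $kN_{az}$ edges), and at most $100kL$ from above, because proper colouring makes $x'$ \emph{uniquely determined} by $c(x'y')=c(xy)$, and goodness of $x,x'$ leaves at most $100k$ choices for $a$. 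This yields $\sum_{(a,z)\in T} f(a,z)^2\leq A\sum_{(a,z)\in T} f(a,z)+Bn^2$, and Cauchy--Schwarz over the at most $n^2$ pairs closes this self-referential inequality, giving $\sum f(a,z)=O(n^2)$ and hence $L=O(n^2)$. This colour-coincidence determinism and the resulting quadratic-in-$L$ inequality (in the spirit of the $\ex^*(n,C_6)=O(n^{4/3})$ argument of Keevash, Mubayi, Sudakov and Verstra\"ete) are the key ideas absent from your proposal; without them the heavy case remains unproved, so the proposal has a genuine gap.
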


Some parts of the proof below will be similar to the proof of the fact $\ex^*(n,C_6)=O(n^{4/3})$ in \cite{keevash2007rainbow}.

\begin{proof}
We start similarly as in the proof of Lemma \ref{lemma_goodbadpath}. Let
\begin{align*}
W=\{(a,z)\in V(G)\times V(G):\textnormal{ for any set $S$ of at most $(100k)^2$ colours there is a rainbow path $axyz$}\\\textnormal{such that $c(ax),c(xy),c(yz)\not\in S$ and $ay,xz$ are good.}\}
\end{align*}
Given $a\in V(G)$, let $Z_a=\{z: (a,z)\in W\}$, and let $Y_a=\{y\in N(N(a))\setminus\{a\}:\textnormal{ $ay$ is good}\}$.

\subparagraph{Claim.} $G[Y_a,Z_a]$ contains no rainbow path of length $2k-5$.\vspace{-3pt} \subparagraph{Proof of Claim.} Suppose it does. Then it also contains a rainbow path $P: z_1y_1\dots z_{k-3}y_{k-3}z_{k-2}$ of length $2k-6$ with $z_i\in Z_a, y_j\in Y_a$. Let
\[S_1=\bigcup_i\{c(z_iy_i),c(y_iz_{i+1})\}\cup\bigcup_i \{c(ax): x\in N(a)\cap N(y_i)\}\cup\bigcup_i \{c(xy_i): x\in N(a)\cap N(y_i)\}\cup\{c(az_i):z_i\in N(a)\}.\]
Note that $|S_1|\leq 2k+ k\cdot  100k+k\cdot 100k+k<(100k)^2$, so we can pick a rainbow path $axyz_1$ such that $ay$, $xz_1$ are good and $c(ax), c(xy), c(yz_1)\not \in S_1$. Note that $y\not =y_i$ for all $i$ and $x\not =z_j$ for all $j$. Let 
\[S_2=S_1\cup\{c(ax),c(xy),c(yz_1)\}\cup \{c(aw):w\in N(a)\cap N(y)\}\cup \{c(wy):w\in N(a)\cap N(y)\}.\]
We have $|S_2|<(100k)^2$, so we can pick a rainbow path $ax'y'z_{k-2}$ such that $ay',x'z_{k-2}$ are good and $c(ax'), c(x'y'), c(x'z_{k-2})\not \in S_2$. Note that $y'\not =y_i,y$ and $x'\not =z_j,x$. But then $axyz_1y_1\dots z_{k-3}y_{k-3}z_{k-2}y'x'a$ is a rainbow $C_{2k}$, giving a contradiction. The claim follows.\medskip

By the Claim, we have $e(Y_a,Z_a)=O(n)$ for all $a$. Hence the number of paths $axyz$ such that $ay$ and $xz$ are good and $(a,z)\in W$ is $O(n^2)$ (since for any $a$, each edge $yz$ extends to at most $100k$ such paths $axyz$).

Now consider $P_3$s $axyz$ with $(a,z)\not \in W$. For any $a$ and $z$, let $f(a,z)$ denote the number of rainbow $P_3$s $axyz$ from $a$ to $z$ such that $ay$ and $xz$ are both good. If $(a,z)\not \in W$, we can pick a colour $c_{az}$ such that there are at least $\lceil f(a,z)/(100k)^2\rceil$ $P_3$s $axyz$ such that $ay, xz$ are good and $c_{az}\in \{c(ax),c(xy),c(yz)\}$. Note that at most $100k$ of these $P_3$s have $c(ax)=c_{az}$, since the colouring is proper and $xz$ is good. Similarly, at most $100k$ of these $P_3$s have $c(yz)=c_{az}$. We deduce that there are at least $N_{az}=\lceil f(a,z)/(100k)^2\rceil -200k$ $P_3$s $axyz$ such that $c(xy)=c_{az}$ and $ay, xz$ are good. Note that these paths must be internally vertex-disjoint. So we can list $N_{az}$ such paths as $ax_iy_iz$ for $i=1,2,\dots,N_{az}$ such that if $i\not =j$ then $x_i\not =x_j$ and $y_i\not =y_j$.

Using the observations above, we now show that there are `many' 6-cycles $ax_iy_izy_jx_ja$ such that $c(x_iy_i)=c(x_jy_j)=c_{az}$ and each pair (of distance 2) in the 6-cycle is good. (Note that if we did not require that $xx'$ and $yy'$ are good then we would immediately get at least $\binom{N_{az}}{2}$ such 6-cycles if $N_{az}>0$). Write $N=N_{az}$. Define an auxiliary graph $H$ on vertex set $\{x_1,\dots,x_N\}$ such that $x_ix_j$ is an edge if and only if $x_ix_j$ is bad. Observe that $H$ contains no path of length $k-1$. Indeed, if $x_{i_1}x_{i_2}\dots x_{i_{k}}$ is such a path in $H$, then we can choose some vertices $b_1,\dots,b_{k-1}$ in $G$ such that $ax_{i_1}b_1x_{i_2}b_2\dots x_{i_{k-1}}b_{k-1}x_{i_k}a$ is a rainbow cycle of length $2k$, giving a contradiction. It follows that $|E(H)|\leq kN$. So there are at most $kN$ pairs $\{i,j\}$ such that $x_ix_j$ is bad. Similarly, there are at most $kN$ pairs $\{i,j\}$ such that $y_iy_j$ is bad. It follows that if $N\geq 1$ then there are at least $\binom{N}{2}-2kN$ $6$-cycles $ax_iy_izy_jx_ja$ in which each pair of vertices of distance 2 is good.

Write $T=\{(a,z)\not \in W: f(a,z)>(100k)^2+200k\}$. By the argument above, the number of 6-cycles $axyzy'x'a$ in which $c(xy)=c(x'y')$ and each pair of vertices of distance 2 is good is at least
\begin{align*}
\frac{1}{6}\sum_{(a,z)\in T}\left[\binom{N_{az}}{2}-2kN_{az}\right],
\end{align*}
which is at least
\[\sum_{(a,z)\in T}(\alpha f(a,z)^2-\beta f(a,z))\]
for some positive constants $\alpha,\beta$.

On the other hand, if $L$ denotes the number of paths $axyz$ in which $ay,xz$ are both good, then the number of such 6-cycles is at most $100kL$. Indeed, there are $L$ ways to choose $xyzy'$, then $x'$ is uniquely determined by the condition $c(xy)=c(x'y')$, and then there are at most $100k$ possible choices for $a$, since we need $xx'$ to be good. Hence
\[\sum_{(a,z)\in T}(\alpha f(a,z)^2-\beta f(a,z))\leq 100kL.\]

But we have \begin{equation}\label{eq_Lbound} L\leq \sum_{(a,z)\in T}f(a,z)+O(n^2).\end{equation} Indeed, we know that the number of $P_3$s $axyz$ (such that $ay$ and $xz$ are good) having $(a,z) \in W$ is $O(n^2)$, the number of such rainbow $P_3$s $axyz$ with $(a,z)\in T$ is $\sum_{(a,z)\in T}{f(a,z)}$, the number of such rainbow $P_3$s $axyz$ with $(a,z)\not \in T, (a,z)\not \in W$ is at most $((100k)^2+200k)n^2$, and finally, the number of such non-rainbow $P_3$s is at most the number of $P_2$s $xyz$ with $xz$ good, which is $O(n^2)$. It follows that
\[\sum_{(a,z)\in T}(\alpha f(a,z)^2-\beta f(a,z))\leq 100k\sum_{(a,z)\in T}f(a,z)+O(n^2),\]
and hence 
\[\sum_{(a,z)\in T} f(a,z)^2\leq  A\sum_{(a,z)\in T}f(a,z)+Bn^2\]
for some positive constants $A,B>0$. But we have
\begin{align*}\sum_{(a,z)\in T} f(a,z)^2\geq \left[\sum_{(a,z)\in T} f(a,z)\right]^2\cdot\frac{1}{|T|}\geq\left[\sum_{(a,z)\in T} f(a,z)\right]^2\cdot\frac{1}{n^2}.\end{align*}

We get
\[\left[\sum_{(a,z)\in T} f(a,z)\right]^2\leq  An^2\sum_{(a,z)\in T}f(a,z)+Bn^4,\]
which gives $\sum_{(a,z)\in T} f(a,z)=O(n^2)$. The statement of the lemma then follows using \eqref{eq_Lbound}.
\end{proof}

\begin{proof}[Proof of Theorem \ref{theorem_pathcycle}]
For $k\geq 3$, Lemma \ref{lemma_pathsoflength2} shows that there are $O(n^2)$ copies of $P_2$, and Lemmas \ref{lemma_badbadpath}, \ref{lemma_goodbadpath} and \ref{lemma_goodgoodpath} show that there are $O(n^2)$ copies of $P_3$. The required upper bound then follows by repeated application of Lemma \ref{lemma_pathsoflength2}. For the lower bound, take an $(l+1)$-partite graph with vertex classes $X_1, \dots, X_{l+1}$ such that $|X_i|=1$ if $i$ is even and $|X_i|=\Theta(n)$ if $i$ is odd, and join vertices $x$ and $y$ if and only if $x\in X_i$ and $y\in X_j$ with $i-j=\pm 1$. (The edge-colouring is arbitrary.)

When $k=2$, the number of paths of length $2$ is $O(n^2)$ by Lemma \ref{lemma_pathsoflength2}, and the number of paths of length $1$ is at most $\ex^*(n,C_4)=\Theta(n^{3/2})$ (see \cite{keevash2007rainbow}). The required upper bound then follows by repeated application of Lemma \ref{lemma_pathsoflength2}. For the lower bound, we can take a $C_4$-free $d$-regular graph on $\Theta(n)$ vertices with $d=\Theta(n^{1/2})$.
\end{proof}

We now prove Theorem \ref{theorem_rainbowcycle}. Although the upper bound is proved for $k=2$ and the lower bound is proved for $k\geq 3$ in \cite{gerbner2019generalized}, we include proofs of these for completeness.

\begin{proof}[Proof of Theorem \ref{theorem_rainbowcycle}]
	Consider first the case $k=2$. For the upper bound, observe that there can be no bad pair if there is no rainbow $C_4$, thus any two vertices $x$ and $z$ are contained in $O(1)$ 4-cycles of the form $xyzw$. The upper bound $\ex(n,C_4,\textnormal{rainbow-}C_4)=O(n^2)$ follows. For the lower bound when $k=2$, let $A$ be a Sidon set in $\ZZ_n$ of size $\Theta(\sqrt{n})$, i.e., a set such that whenever $a, b, a', b'\in A$ with $a+b=a'+b'$ then $(a,b)=(a',b')$ or $(a,b)=(b',a')$. (See e.g. \cite{erdos1941problem} for the construction of such sets.) Partition $A$ into two subsets $A_1,A_2$ of size $\Theta(\sqrt{n})$ each. Let $G$ be a 4-partite graph with vertex classes $X_{00},X_{01},X_{10},X_{11}$ each being copies of $\ZZ_{n}$, and edges given as follows. If $x_{00}\in X_{00}, x_{01}\in X_{01},x_{10}\in X_{10},x_{11}\in X_{11}$, then we join:
	\begin{itemize}
		\item $x_{00}$ to $x_{10}$ by an edge of colour $a_1$ if $x_{10}-x_{00}=a_1\in A_1$;\vspace{-6pt}
		\item $x_{00}$ to $x_{01}$ by an edge of colour $a_2$ if $x_{01}-x_{00}=a_2\in A_2$;\vspace{-6pt}
		\item $x_{10}$ to $x_{11}$ by an edge of colour $a_2$ if $x_{11}-x_{10}=a_2\in A_2$;\vspace{-6pt}
		\item $x_{01}$ to $x_{11}$ by an edge of colour $a_1$  if $x_{11}-x_{01}=a_1\in A_1$.
	\end{itemize}
It is easy to check that the graph we get is properly edge-coloured with no rainbow $C_4$, has $4n$ vertices, and the number of 4-cycles is $n|A_1||A_2|=\Theta(n^2)$.\medskip

Now consider the lower bound for $k\geq 3$. Take a $(2k)$-partite graph with vertex classes $X_1, \dots, X_{2k}$, where $|X_1|=|X_2|=|X_4|=|X_5|=1$, $|X_6|=|X_8|=|X_{10}|=\dots=|X_{2k}|=n$, $|X_3|=n$ and $|X_7|=|X_9|=\dots=|X_{2k-1}|=1$. Join two vertices $x$ and $y$ by an edge if and only if $x\in X_i$, $y\in X_j$ with $i-j\equiv \pm 1$ mod $2k$. Give the unique edge $X_1$ to $X_2$ and the unique edge $X_4$ to $X_5$ colour 1, and arbitrary distinct colours to the remaining edges. It is easy to see that any $2k$-cycle must contain both of the edges of colour $1$, there are $\Theta(n)$ vertices and $\Theta(n^{k-1})$ copies of $C_{2k}$.\medskip

It remains to prove the upper bound for $k\geq 3$. Given a $2k$-cycle $x_1\dots x_{2k}x_1$, define its pattern to be the list of $i$ such that $x_ix_{i+2}$ is good (indices understood mod $2k$), together with the list of pairs $(i,j)$ such that $c(x_ix_{i+1})=c(x_jx_{j+1})$. Note that there are finitely many patterns, so it suffices to show that for each pattern the number of $2k$-cycles of that pattern is $O(n^{k-1})$.
	
	Consider first the case $k\geq 4$. Assume that we have a pattern and an $i$ such that $x_{i-1}x_{i+1}$ is good but $x_{i-3}x_{i-1}$ is bad in the pattern. Then we can choose vertices $x_{i+1}x_{i+2}\dots x_{i+2k-4}$ in $O(n^{k-2})$ ways, since we have to pick a path of length $2k-5$. (Note that $x_{i+2k-4}=x_{i-4}$.) Then, by Lemmas \ref{lemma_badbadpath} and \ref{lemma_goodbadpath}, there are at most $O(n)$ ways of choosing the path $x_{i-4}x_{i-3}x_{i-2}x_{i-1}$ according to the pattern (since $x_{i-3}x_{i-1}$ has to be bad). Then there are at most $100k$ possible ways of choosing $x_i$, since $x_{i-1}x_{i+1}$ is good. So we get $O(n^{k-1})$ $2k$-cycles for these patterns.
	
	So (when $k\geq 4$) it remains to consider the case when there is no $i$ such that $x_{i-1}x_{i+1}$ is good but $x_{i-3}x_{i-1}$ is bad. Observe that for any $2k$-cycle $x_1\dots x_{2k}x_1$, at least one (in fact, at least two) of the pairs $x_2x_4,x_4x_6,\dots,x_{2k}x_2$ has to be good (otherwise we can find a rainbow $C_{2k}$). So it remains to consider patterns such that each of these pairs is good. Similarly, we may assume that each of $x_1x_3,\dots,x_{2k-1}x_1$ is a good pair.
	
	Now consider the colours for the pattern. We must have a pair of different edges with the same colour. We may assume that we have $c(x_1x_2)=c(x_ix_{i+1})$ for some $i$ with $3\leq i\leq k+1$. Then we can choose $x_2x_3\dots x_{2k-1}$ in $O(n^{k-1})$ ways (since it is a path of length $2k-3$). Then $x_1$ is uniquely determined by the condition $c(x_1x_2)=c(x_ix_{i+1})$, and then there are at most $100k$ possible choices for $x_{2k}$ (according to the pattern), since $x_1x_{2k-1}$ is good. This gives $O(n^{k-1})$ $2k$-cycles of this pattern, as required.\medskip
	
	It remains to consider the case $k=3$. Observe that if $k=3$, then for any edge $ab$ there is at most one way to extend this edge to a path $abc$ such that $ac$ is bad. Indeed, if we have two different extensions $abc$ and $abc'$ then there is a rainbow $6$-cycle of the form $axcbc'x'a$. Consider any pattern, we show that there are $O(n^2)$ $6$-cycles of that pattern. We may assume that $c(x_1x_2)=c(x_ix_{i+1})$ for some $i\in \{3,4\}$. If $x_5x_1$ is good in the pattern, then we are done exactly as above: we can choose $x_2x_3x_4x_5$ in $O(n^2)$ ways, then $x_1$ is determined by the condition $c(x_1x_2)=c(x_ix_{i+1})$, and there are at most $100k$ choices for $x_6$. So we may assume that $x_5x_1$ is bad.\vspace{-3pt}
	\subparagraph{Case 1: $i=4$.} Then the same argument shows that we are done if $x_2x_4$ is good. So we may assume that $x_2x_4$ and $x_5x_1$ are both bad. Then we can choose $x_6x_1x_2x_3$ in $O(n^2)$ ways, and we can extend $x_2x_3$ to a path $x_2x_3x_4$ such that $x_2x_4$ is bad in at most one way, and similarly we can extend $x_1x_6$ in at most one way to get $x_1x_6x_5$. Then all the vertices are determined, so we get $O(n^2)$ copies.
	\subparagraph{Case 2: $i=3$.} 
	There are $O(n^2)$ ways of choosing $x_3x_2x_1x_6$, and then there is at most one way of extending $x_1x_6$ to a path $x_1x_6x_5$ such that $x_1x_5$ is bad, and there is at most one way of picking $x_4$ such that $c(x_3x_4)=c(x_1x_2)$. So we get $O(n^2)$ copies of $C_6$, as required.
\end{proof}

\subsection{Odd cycles}
We now turn to the case of odd cycles. 
Once we have established Theorem \ref{theorem_pathcycle}, the proof of Theorem \ref{theorem_oddcycle} is essentially the same as the proof of Gishboliner and Shapira \cite{gishboliner2018generalized} for the non-rainbow version of the problem.
\begin{proof}[Proof of Theorem \ref{theorem_oddcycle}]
	The lower bounds follow from the fact $\ex(n,F,\textnormal{rainbow-}H)\geq \ex(n,F,H)$ and the corresponding results for the non-rainbow problem, see \cite{gishboliner2018generalized}. (Note that the only difficult case is when $k=2$.)
	
	For the upper bound when $k=2$, observe that there can be no bad pair of vertices if there is no rainbow $C_4$, hence the number of $(2l+1)$-cycles is at most $100k=200$ times the number of paths of length $2l-1$, which is $O(n^{l+1/2})$ by Theorem \ref{theorem_pathcycle}.
	
	Now consider the case $k\geq 3$. Given a path $P: x_1x_2\dots x_{2l-1}$ of length $2l-2$ in $G$, write $X_P=N(x_1)\setminus V(P)$ and $Y_P=N(x_{2l-1})\setminus V(P)$. Then the number of ways of extending path $P$ to a cycle $x_1x_2\dots x_{2l+1}x_1$ is $e(X_P,Y_P)$. But this is at most the number of paths of length $2$ starting at $x_1$ in the graph $G[\{x_1\}\cup X_P\cup Y_P]$, which is $O(1+|X_P|+|Y_P|)$ by Lemma \ref{lemma_pathsoflength2}. It follows that $P$ extends to at most $O(1+|X_P|+|Y_P|)$ cycles of length $2l+1$. But $|X_P|$ is the number of ways of extending $P$ to a path $x_0x_1x_2\dots x_{2l-1}$, and similarly, $|Y_P|$ is the number of ways of extending $P$ to a path $x_1\dots x_{2k}$. It follows that if the number of paths of length $s$ is $p_s$, then 
$	\sum_{P}{|X_P|}=O(p_{2l-1})$,
	and similarly for $Y_P$. Hence the number of cycles of length $2l+1$ is $O(p_{2l-2})+O(p_{2l-1})$, which is $O(n^l)$ by Theorem~\ref{theorem_pathcycle}.
\end{proof}

\section{Forbidden rainbow $C_{2k+1}$}\label{section_oddcycle}

In this section we prove the following result, which is the only non-trivial case of Theorem \ref{theorem_allcycles} with $t$ odd.

\begin{theorem}\label{theorem_oddodd}
	If $k\geq l\geq 2$ are positive integers, then $\ex(n,C_{2l+1},\textnormal{rainbow-}C_{2k+1})=\Theta(n^{2l-1}).$
\end{theorem}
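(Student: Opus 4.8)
The plan is to treat the upper and lower bounds separately; since the target exponent $2l-1$ is much larger than the non-rainbow value $\Theta(n^{(s-1)/2})$ from Theorem \ref{theorem_nonrainbow}, both directions must genuinely exploit the colouring rather than just the underlying graph.

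For the lower bound, the natural candidate is a blow-up of the pattern cycle $C_{2l+1}$ on vertex classes $V_0,\dots,V_{2l}$, with complete bipartite graphs between consecutive classes. Making two of the classes singletons and the remaining $2l-1$ classes of size $\Theta(n)$ produces $\Theta(n^{2l-1})$ copies of $C_{2l+1}$ (one vertex from each large class), which is the desired count; the point is that every odd cycle is forced through the unique edge joining the two singleton classes, since the graph with that edge removed is bipartite. The work is entirely in the colouring. I would first invoke Theorem \ref{theorem_GMMPcycle} for the base case $l=k$, where a proper colouring forcing every such $C_{2l+1}$ to repeat a colour already exists. For $k>l$ the forbidden cycle is strictly longer, so every $C_{2k+1}$ must \emph{backtrack} within some layer of large classes (a parity/winding count shows it traverses at least $k-l$ layer-edges twice); the aim is to choose a proper colouring of the layers so that any such repeated traversal is forced to reuse a colour, which kills every rainbow $C_{2k+1}$ while leaving the short cycles untouched.

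For the upper bound I would bound the number of copies of $C_{2l+1}$ by $O(n^{2l-1})$ directly. Writing each $C_{2l+1}$ as a path $P$ of length $2l-2$ (with $2l-1$ vertices, of which there are trivially at most $n^{2l-1}$) together with a path of length $3$ joining its two endpoints, it suffices to show that, amortised over all such $P$, each path closes up in $O(1)$ ways. As in Section \ref{section_evencycle} I would set up a good/bad dichotomy on pairs of vertices, a pair being bad if it is joined by many short connections. The key structural input is that $G$ cannot contain two internally disjoint, colour-disjoint rainbow paths whose lengths sum to $2k+1$ between the same pair of vertices, since their union would be a rainbow $C_{2k+1}$; combined with the bound $\ex^*(n,P_m)=O(n)$ for rainbow-path-free graphs, this severely limits how many pairs can be joined by many rainbow closings. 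The contribution of the remaining (heavy) pairs would then be controlled by a convexity / Cauchy--Schwarz argument in the spirit of Lemma \ref{lemma_goodgoodpath}, counting auxiliary short cycles through such pairs.

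The main obstacle is this last closing step in the upper bound. The difficulty is that the cycle being counted, $C_{2l+1}$, and the forbidden cycle, $C_{2k+1}$, have different lengths when $k>l$, so the rainbow obstruction does not apply directly to the length-$3$ closings; one must instead extend a heavy pair by a rainbow path of length $2(k-l)+3$ before a contradiction appears, and then transfer the resulting bound back to the count of short closings. Making this transfer quantitative rather than merely qualitative, while keeping the heavy pairs' contribution at $O(n^{2l-1})$, is where the real effort lies, and is the analogue here of the delicate estimate carried out in the proof of Lemma \ref{lemma_goodgoodpath}.
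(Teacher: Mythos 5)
There are genuine gaps on both sides of your argument. For the lower bound, your blow-up with two adjacent singleton classes does not by itself exclude rainbow copies of $C_{2k+1}$: a cycle passing once through the unique edge between the singletons repeats no colour, so everything rests on the colouring --- and the colouring step fails as sketched. A backtrack of a $C_{2k+1}$ inside a layer consists of two consecutive edges sharing their middle vertex, which in a \emph{proper} colouring necessarily receive \emph{different} colours; and if all edges of your graph get globally distinct colours (which is proper), then every cycle is rainbow, and your graph does contain copies of $C_{2k+1}$ (wind once, backtrack $k-l$ times inside a layer between two large classes). So ``repeated traversal forces a repeated colour'' is false for arbitrary proper colourings, and you never exhibit a specific colouring making it true. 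The paper sidesteps this entirely: keep all $2l+1$ classes of size $n$, make the layers $(X_1,X_2)$ and $(X_3,X_4)$ perfect matchings all of whose edges receive the \emph{same} colour $1$, and colour the remaining edges distinctly. Deleting any layer leaves a bipartite graph, so every odd cycle uses an edge in every layer, hence contains two colour-$1$ edges and is non-rainbow --- uniformly for all $k\geq l$, with no case split at $l=k$ --- while the two matchings cost exactly two factors of $n$, leaving $\Theta(n^{2l-1})$ copies of $C_{2l+1}$.

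For the upper bound, the step you yourself flag as ``where the real effort lies'' is a genuine gap, and the paper closes it with two ideas absent from your plan. First, it separates rainbow from non-rainbow copies of $C_{2l+1}$: Lemma \ref{lemma_oddwlograinbow} shows that in \emph{any} properly coloured graph the number of non-rainbow copies is $O(n^{2l-1})$ plus $O(1)$ times the number of rainbow copies, by fixing a colour-coincidence pattern, noting that a coincidence pins down one or two vertices, and comparing $\min\{d_1,d_2\}$ with $(d_1-5l)(d_2-5l)$. This decomposition matters: in the extremal construction \emph{every} copy of $C_{2l+1}$ is non-rainbow, so a scheme like yours that charges all copies against rainbow-$C_{2k+1}$ obstructions on short closings has nothing to bite on for the dominant term. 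Second, for the rainbow copies the paper does not amortise closings at all: after iteratively deleting edges lying in no rainbow $C_{2l+1}$ (which preserves every rainbow copy), each edge $ax$ carries a rainbow path of length $2l$ from $a$ to $x$, and splicing this path with $2(k-l)+1$ carefully chosen vertices of a $(100k)$-ary tree inside the second neighbourhood of $a$ manufactures a rainbow $C_{2k+1}$ (Lemma \ref{lemma_P2odd}). This is exactly where the length discrepancy $2(k-l)$ that blocked you is absorbed --- on the tree side, not on heavy pairs --- and it yields $O(n)$ paths of length $2$ per vertex, hence $O(n^{l+1})\leq O(n^{2l-1})$ rainbow copies directly, with no analogue of the Cauchy--Schwarz argument of Lemma \ref{lemma_goodgoodpath} required.
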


From now on, unless otherwise stated, we will assume that $k\geq l\geq 2$ are integers, $G$ is a properly edge-coloured graph of order $n$ with no rainbow $C_{2k+1}$, and $c$ denotes the edge-colouring. Also, we will say (as before) that a pair $x,y$ of vertices is bad if $|N(x)\cap N(y)|\geq 100k$, and good otherwise.



We will deduce Theorem \ref{theorem_oddodd} from the following two lemmas.

\begin{lemma}\label{lemma_oddwlograinbow}
	Let $G$ be any properly edge-coloured graph, and let $l\geq 2$ be an integer. Then the number of non-rainbow copies of $C_{2l+1}$ in $G$ is $O(n^{2l-1})+O(\textnormal{number of rainbow $C_{2l+1}$s in $G$})$.
\end{lemma}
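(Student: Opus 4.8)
The plan is to split the non-rainbow copies of $C_{2l+1}$ into a few types and bound each, reducing almost everything either to $O(n^{2l-1})$ or to the number of rainbow copies. Write a copy as $v_0v_1\dots v_{2l}v_0$. Since the colouring is proper, any two edges of the same colour are vertex-disjoint. The basic difficulty I expect is that naive counting is off by exactly one factor of $n$: if $c(v_0v_1)=c(v_iv_{i+1})=\gamma$, then this single colour equation, via properness, determines one vertex from the others (for instance, once $v_{i+1}$ and $\gamma$ are known, $v_i$ is the unique $\gamma$-neighbour of $v_{i+1}$), so choosing the remaining $2l$ vertices gives only $O(n^{2l})$. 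The whole task is to extract one more saving, and the key idea will be a dichotomy based on the codegree of the two neighbours $v_{i-1},v_{i+1}$ of an endpoint $v_i$ of a repeated edge.

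First I would dispose of the copies having at least two colour coincidences (two distinct monochromatic pairs of edges, possibly sharing an edge if a colour repeats three times). Here two independent colour equations pin down two vertices by properness, so choosing the rest gives $O(n^{2l-1})$; summing over the $O(1)$ patterns of which edges coincide preserves this bound. It then remains to treat the \emph{near-rainbow} copies, those with exactly one monochromatic pair $e_1=v_0v_1$, $e_2=v_iv_{i+1}$ of colour $\gamma$.

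Fix a large enough constant $M$ (depending on $l$). If $|N(v_{i-1})\cap N(v_{i+1})|\le M$, I count directly: choose $v_{i-1},v_{i+1}$ freely ($n^2$ ways), choose $v_i$ among the $\le M$ common neighbours (which fixes $\gamma$), and then build the remaining path of length $2l-1$ joining the two fixed ends, using $c(v_0v_1)=\gamma$ to force one interior vertex, giving $n^2\cdot M\cdot n^{2l-3}=O(n^{2l-1})$. If instead $|N(v_{i-1})\cap N(v_{i+1})|>M$, I replace $v_i$ by another common neighbour $v_i'$ of $v_{i-1},v_{i+1}$ whose two new edges $v_{i-1}v_i'$, $v_i'v_{i+1}$ avoid all the other $2l-1$ colours and each other; only $O(l)$ common neighbours are forbidden (and $M$ excludes the cycle vertices), so a valid $v_i'$ exists, and because the copy was near-rainbow the resulting $C_{2l+1}$ is genuinely rainbow. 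This association has multiplicity $O(1)$: from a rainbow copy one recovers a preimage by selecting the swapped position, an edge to play the role of $e_1$ (supplying $\gamma$), and a side, after which $v_i$ is forced as a unique same-coloured neighbour. Hence these near-rainbow copies number $O(\text{rainbow }C_{2l+1})$.

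Combining the three buckets gives $O(n^{2l-1})+O(\text{rainbow }C_{2l+1})$, as required. I expect the main obstacle to be exactly the single-factor deficit noted at the outset: the codegree dichotomy is what converts it into either a true structural saving (low codegree) or a one-vertex swap into a rainbow copy (high codegree), and the delicate points are checking that the swap really lands on a rainbow cycle and that the un-swap map has bounded multiplicity. A minor technical nuisance is the degenerate position patterns in which the repeated edge $v_0v_1$ abuts a shoulder $v_{i-1}$; there the colour equation should instead force the endpoint $v_0$, and the same counting goes through.
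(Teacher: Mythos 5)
Your proposal is correct and follows essentially the same route as the paper: copies with at least two colour coincidences are counted directly at $O(n^{2l-1})$ (two colour equations plus properness pin down two vertices), and copies with exactly one coincidence are compared to rainbow copies via the codegree of the two cycle-neighbours of an endpoint of a repeated edge. The paper packages this last case slightly differently --- it fixes all vertices except one endpoint from \emph{each} of the two same-coloured edges and uses the pointwise bound $\min\{d_1,d_2\}\leq 10l+\max\{0,(d_1-5l)(d_2-5l)\}$ on extension counts --- but that is the same dichotomy as your low-codegree direct count versus high-codegree one-vertex swap with bounded multiplicity.
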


%

\begin{lemma}\label{lemma_P2odd}
	Let $k\geq l\geq 2$ be integers and let $G$ be a properly edge-coloured graph with no rainbow $C_{2k+1}$. Assume that every edge of $G$ is contained in a rainbow $C_{2l+1}$. Then for every $a\in V(G)$ the number of paths $axy$ of length $2$ starting at $a$ in $G$ is $O(n)$.
\end{lemma}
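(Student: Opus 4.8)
The plan is to adapt the proof of Lemma~\ref{lemma_pathsoflength2} to the odd-cycle setting, exploiting the extra hypothesis that every edge lies in a rainbow $C_{2l+1}$. As in that lemma, fix $a\in V(G)$, let $X=N(a)$ and $Y=N(N(a))\setminus\{a\}$, and observe that the number of paths $axy$ equals $e(X,Y)$. It suffices to bound this, and for that it is enough to show that the bipartite-like structure between $X$ and $Y$ cannot contain a fixed large tree, or (more directly) cannot contain too long a suitable path, so that a result of the form $\ex^*(m,P_r)=O(m)$ applies. The whole point is that if $e(X,Y)$ were large, we could build a rainbow $C_{2k+1}$, contradicting the hypothesis on $G$.

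The key new ingredient compared with Lemma~\ref{lemma_pathsoflength2} is the rainbow odd-cycle hypothesis, which lets me convert a path into a closed odd walk of the right length. First I would argue that, since $G$ has bounded ``local'' structure and every edge sits in a rainbow $C_{2l+1}$, I can extend a rainbow path found inside $G[X\cup Y]$ into a rainbow $C_{2k+1}$ by attaching the vertex $a$ together with a short detour. Concretely, I expect to locate inside $G[X\cup Y]$ a long rainbow path, say of the form $x_1y_1x_2y_2\cdots$ with the $x_i\in X$ and $y_i\in Y$, then close it up through $a$ using the edges $ax_1$ and $ax_{\text{last}}$. To make the resulting cycle have the correct odd length $2k+1$ rather than an even length, I would use the assumption that an edge lies in a rainbow $C_{2l+1}$ to insert an odd-length rainbow detour somewhere along the cycle, padding the parity and length simultaneously; this is exactly where the $k\ge l$ hypothesis is needed, so that there is room to both reach length $2k+1$ and keep everything rainbow by avoiding the finitely many already-used colours.

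The careful bookkeeping, as in Lemma~\ref{lemma_pathsoflength2}, is in choosing the intermediate vertices greedily so that all the edges receive distinct colours and the vertices stay distinct. Because $G$ is properly coloured and each offending pair has many common neighbours (or a long rainbow path is available), at each step only $O(1)$ colours and vertices are forbidden, so a large enough branching tree (a $(Ck)$-ary tree of bounded depth, for a suitable constant $C$) guarantees the greedy choices succeed. Thus if $e(X,Y)$ were superlinear, $G[X\cup Y]$ would contain such a tree, hence a rainbow path long enough to close into a rainbow $C_{2k+1}$ after the detour through $a$, a contradiction. This forces $e(X,Y)=O(n)$.

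The main obstacle I anticipate is the parity and length matching: a naive closure through $a$ produces an even cycle, so I must carefully use the rainbow $C_{2l+1}$ containing some edge to splice in an odd-length rainbow segment of exactly the right size to reach $2k+1$, all while preserving the rainbow property. Managing the colour constraints of this splice — ensuring the detour's colours avoid the (constantly many) colours already used on the main path and on the edges at $a$ — is the delicate part, but since only $O(1)$ colours are excluded at each step and the detour has bounded length, the proper colouring and the abundance of common neighbours should make the greedy construction go through.
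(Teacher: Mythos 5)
Your core construction is exactly the paper's: grow a large branching tree in the second neighbourhood of $a$, greedily extract a rainbow path of even length $2(k-l)$ from one neighbour $x$ of $a$ to another, and fix the parity by replacing the closing edge $ax$ with the rainbow path of length $2l$ through $ax$ supplied by the hypothesis (the rainbow $C_{2l+1}$ minus the edge $ax$), giving a rainbow cycle of length $2l+2(k-l)+1=2k+1$. However, there is a genuine gap in your reduction to the ``bipartite-like structure''. In Lemma \ref{lemma_pathsoflength2} one may assume $G$ is bipartite outright, but here that reduction fails: passing to a bipartite subgraph destroys every rainbow $C_{2l+1}$ (an odd cycle cannot live in a bipartite graph), so the hypothesis that every edge lies in a rainbow $C_{2l+1}$ does not survive. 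Your sketch silently relies on bipartiteness anyway: the identity ``number of paths $axy$ equals $e(X,Y)$'' is false when $X=N(a)$ and $Y=N(N(a))\setminus\{a\}$ overlap, and, more seriously, your alternating path $x_1y_1x_2y_2\cdots$ with $x_i\in X$, $y_i\in Y$ is what guarantees both the even length $2(k-l)$ and that the final vertex lies in $N(a)$ so the cycle can be closed; with edges inside $N(a)$ or inside the second neighbourhood present, neither the parity nor the endpoint condition is controlled. The paper resolves this with a two-graph device you would need to add: for \emph{every} bipartition $V(G)=X\cup Y$ with $a\in Y$, it bounds the paths of length $2$ at $a$ in the bipartite graph $G_{X,Y}$ (where the tree and the greedy alternating path live), while the rainbow $C_{2l+1}$ and the resulting path $P$ from $a$ to the root $x$ are taken in the full graph $G$; since a random bipartition preserves a quarter of the paths in expectation, a uniform $O(n)$ bound over all bipartitions yields the lemma.

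A second, smaller issue is the order of quantifiers in your colour bookkeeping: you propose to make ``the detour's colours avoid the colours already used on the main path'', but the detour is essentially fixed once the edge $ax$ is chosen (it comes from some rainbow $C_{2l+1}$ through $ax$), so you cannot re-choose it at the end. The correct order, as in the paper, is to fix the rainbow path $P$ of length $2l$ through the root edge first, and then run the greedy tree argument avoiding the $O(1)$ vertices of $V(P)$ and the $O(1)$ colours appearing on $P$ at each step; with a $(100k)$-ary tree of depth $2k$ there is ample room for these exclusions. With the bipartition device and this reordering, your plan matches the paper's proof.
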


\begin{proof}[Deducing Theorem \ref{theorem_oddodd}]
	For the lower bound, take a $(2l+1)$-partite graph with vertex classes $X_1, \dots, X_{2l+1}$ all being copies of $\{1,\dots,n\}$. Join any $x\in X_1$ to $x\in X_2$ by an edge of colour 1, and also $x\in X_3$ to $x\in X_4$ by an edge of colour $1$. For all $i\not =1,3$, join each pair of vertices $x,y$ with $x\in X_i$, $y\in X_{i+1}$ by an edge of arbitrary unused colour (with indices understood mod $2l+1$). It is clear that the graph we get is properly edge-coloured, there are $\Theta(n)$ vertices and $\Theta(n^{2l-1})$ copies of $C_{2l+1}$. Furthermore, no copy of $C_{2k+1}$ is rainbow, since any $C_{2k+1}$ must contain an edge between each pair of $X_i, X_{i+1}$ (otherwise it would be a subgraph of a bipartite graph). The lower bound follows.
	
	Now consider the upper bound. By Lemma \ref{lemma_oddwlograinbow}, it suffices to show that if $G$ contains no rainbow $C_{2k+1}$ then the number of rainbow $C_{2l+1}$s is $O(n^{2l-1})$. For this, we may assume that any edge is contained in a rainbow copy of $C_{2l+1}$. But then, by Lemma \ref{lemma_P2odd}, for any vertex $a\in V(G)$ there are $O(n)$ paths of length $2$ starting at $a$. By repeated application of this fact, it follows that for any $a$ there are $O(n^{l})$ paths of length $2l$ starting at $a$, and hence there are $O(n^{l+1})\leq O(n^{2l-1})$ copies of $C_{2l+1}$.
\end{proof}

\begin{proof}[Proof of Lemma \ref{lemma_oddwlograinbow}]
	We will consider patterns of $(2l+1)$-cycles. Recall that the pattern $\mathcal{P}$ of a $(2l+1)$-cycle $x_1\dots x_{2l+1}x_1$ is the list of $i$ such that $x_ix_{i+2}$ is good, together with the list of pairs $(i,j)$ such that $c(x_ix_{i+1})=c(x_jx_{j+1})$ (with the indices understood mod $2l+1$). Since there are finitely many patterns, it suffices to show that for any non-rainbow pattern the required bound holds for cycles of that pattern.
	
	Consider first the case when there are three edges with the same colour in a pattern $\mathcal{P}$, say $x_px_{p+1}$, $x_qx_{q+1}$, $x_rx_{r+1}$. Then we can pick $(x_i)_{i\not =p,q}$ in $O(n^{2l-1})$ ways, and there is at most one way of extending those points to a $(2l+1)$-cycle of the appropriate pattern. This shows that there are $O(n^{2l-1})$ cycles with this pattern.
	
	Now consider the case when there are two different colours such that each of them appears at least twice as the colour of an edge. For both of these colours, pick two edges of the appropriate colour. So we have $c(e)=c(e')$ and $c(f)=c(f')$ in our pattern for four different edges $e,e',f,f'$. Note that we must have $e\cup e'\not =f\cup f'$. So we can pick $i,j$ such that $x_i\in (e\cup e')\setminus (f\cup f')$ and $x_j\in (f\cup f')\setminus (e\cup e')$. Then picking the vertices $(x_a)_{a\not =i,j}$ determines the $(2l+1)$-cycle uniquely by the colour conditions. It follows that there are $O(n^{2l-1})$ cycles of this pattern.
	
	It remains to consider patterns $\mathcal{P}$ in which there is only one pair of edges of the same colour, say $c(x_ix_{i+1})=c(x_jx_{j+1})$, with $i\not =j-1, j, j+1$. Given a choice $X=\{x_a: a\not =i,j\}$ of all vertices except $x_i,x_j$, consider the number of ways of extending $X$ to a $(2l+1)$-cycle. Write $d_1=|N(x_{i-1})\cap N(x_{i+1})\setminus X|$ and $d_2=|N(x_{j-1})\cap N(x_{j+1})\setminus X|$. Then the number of ways of extending $X$ to a $(2l+1)$-cycle of pattern $\mathcal{P}$ is at most $\min\{d_1,d_2\}$, whereas the number of ways of extending $X$ to a rainbow $C_{2l+1}$ is at least $(d_1-5l)(d_2-5l)$. 
	But we have $\min\{d_1,d_2\}\leq 10l+\max\{0,(d_1-5l)(d_2-5l)\}$, so the number of extensions of pattern $\mathcal{P}$ is at most $O(1)$ plus the number of rainbow extensions. Summing over all possible choices of $X$, we get the required bound.
\end{proof}

Lemma \ref{lemma_P2odd} is proved similarly to Lemma \ref{lemma_pathsoflength2}.

\begin{proof}[Proof of Lemma \ref{lemma_P2odd}]
	Given a bipartition $V(G)=X\cup Y$ of the vertex set of $G$, let $G_{X,Y}$ be the corresponding bipartite graph obtained from $G$ (i.e., $G_{X,Y}$ is obtained by deleting all edges inside $X$ and inside $Y$). Since a random bipartition is expected to preserve a quarter of all paths of length $2$ starting at $a$, it suffices to show that for every bipartition $V(G)=X\cup Y$ with $a\in Y$, the number of paths of length $2$ starting at $a$ in $G_{X,Y}$ is $O(n)$, where the implied constant is independent of the bipartition. So let $V(G)=X\cup Y$ be any bipartition. Write $X_1=N_G(a)\cap X$ and $Y_1=N_G(X_1)\cap Y\setminus\{a\}$, so that we would like to show $e_{G_{X,Y}}(X_1,Y_1)=O(n)$. It suffices to show that $G_{X,Y}[X_1\cup Y_1]$ does not contain a $(100k)$-ary tree of depth $2k$.
	
	Suppose it contains such a tree, then it also contains a $(100k)$-ary tree $T$ of depth $2k-1$ rooted at some $x\in X_1$. Since $ax\in E(G)$, the edge $ax$ of $G$ is contained in a rainbow cycle of length $2l+1$ in $G$. Hence we can find a rainbow path $P: az_1z_2\dots z_{2l-1}x$ of length $2l$ from $a$ to $x$ in $G$. Then we can recursively find distinct vertices $x=x_1, x_2, \dots, x_{2(k-l)+1}$ on our tree $T$ such that \begin{itemize}
	\item for all $i$ we have $x_ix_{i+1}\in E(G_{X,Y})$
	\item for all  $i$ even we have $x_i\in Y_1\setminus V(P)$;
	\item for all $i\geq 3$ odd we have $x_i\in X_1\setminus V(P)$;
	\item for all $i$, $c(x_ix_{i+1})$ does not appear on the path $az_1z_2\dots z_{2l-1}x_1\dots x_i$;
	\item the colour $c(ax_{2(k-l)+1})$ does not appear on the path $az_1z_2\dots z_{2l-1}x_1\dots x_{2(k-l)}$.
\end{itemize}
But then $az_1z_2\dots z_{2l-1}x_1x_2\dots x_{2(k-l)+1}a$ is a rainbow cycle of length $2k+1$ in $G$, giving a contradiction.
\end{proof}

\section{Deducing \texorpdfstring{Theorem \ref{theorem_allcycles} and Theorem \ref{proposition_triangle}}{Theorem 1.2 and Theorem 1.6}}

We now summarise how we deduce each case in Theorem \ref{theorem_allcycles}.

\begin{proof}[Proof of Theorem \ref{theorem_allcycles}]
	We have the following cases.
	\begin{itemize}
		\item If $s=t=4$, then the result follows from Theorem \ref{theorem_rainbowcycle}. If $t=4, s\not =4$ and $s$ is even, then it follows from Theorem \ref{theorem_GMMPcycle}. If $t=4$ and $s$ is odd, it follows from Theorem \ref{theorem_oddcycle}.
		\item If $s,t$ are even with $s\not =t$, then the result follows from Theorem \ref{theorem_GMMPcycle}.
		\item If $s=t\geq 6$ is even, then the result follows from Theorem \ref{theorem_rainbowcycle}.
		\item If $t\geq 6$ is even and $s$ is odd, then the result follows from Theorem \ref{theorem_oddcycle}.
		\item If $s, t$ are odd with $s\leq t$, then the result follows from Theorem \ref{theorem_oddodd}.
		\item If $t$ is odd, and $s$ is even or $s>t$, then the upper bound is trivial, and for the lower bound we can take a blowup of $C_{s}$, (i.e., we replace each vertex of $C_s$ by $n$ vertices and each edge by a complete bipartite graph. The edge-colouring is arbitrary.)
	\end{itemize}
\end{proof}

Finally, we prove Theorem \ref{proposition_triangle} concerning triangles.
\begin{proof}[Proof of Theorem \ref{proposition_triangle}]
	For the upper bound $\ex(n,C_3,\textnormal{rainbow-}C_{2k})=O(\ex^*(n,C_{2k}))$, observe that the number of triangles containing a good pair is at most $100k|E(G)|$, since we can pick the good pair in at most $|E(G)|$ ways. So it suffices to show that the number of paths $xyz$ with $xz$ bad is $O(|E(G)|)$. But for any $y\in V(G)$, if we define an auxiliary graph $H_y$ with vertex set $N(y)$ and edges being the bad pairs, then there can be no path $x_1\dots x_{k}$ of length $k-1$ in $H_y$ (otherwise we can find a rainbow cycle $yx_1b_1x_2b_2\dots x_ky$). It follows that $H_y$ has at most $k|V(H_y)|=k\deg_G(y)$ edges, so each $y$ is contained in at most $k\deg(y)$ paths $xyz$ with $xz$ bad. But $\sum_y\deg(y)=2|E(G)|$, giving the required bound.
	
	For the lower bound, the statements $\ex(n,C_3,\textnormal{rainbow-}C_{2k})\geq \ex(n,C_3,C_{2k})$ and $\ex(n,C_3,\textnormal{rainbow-}C_{2k+1})\geq \ex(n,C_3,C_{2k+1})$ are clear, and the lower bounds $\ex(n,C_3,C_{2k})=\Omega(\ex(n,\{C_4,C_6,\dots,C_{2k}\}))$, $\ex(n,C_3,C_{2k+1})=\Omega(\ex(n,\{C_4,C_6,\dots,C_{2k+1}\}))$ follow from Theorem \ref{theorem_trianglenonrainbow}.
	
	Finally, we prove that $\ex(n,C_3,\textnormal{rainbow-}C_{2k})=\Omega(n^{1+1/k})$ when $k$ is odd and $\ex(n,C_3,\textnormal{rainbow-}C_{2k+1})=\Omega(n^{1+1/k})$ when $k$ is even. Take a $B_k$-set $A$ of size $\Theta(n^{1/k})$ in $\ZZ_n$, that is, a set such that any $m\in \ZZ_n$ can be written as $a_1+\dots+a_k$ with $a_i\in A$ in at most one way (ignoring permutations of the summands). (See \cite{bose1960theorems} for the construction of such `dense' $B_k$-sets.) Then we take a tripartite graph $G$ with vertex classes $X_1, X_2, Y$ all being copies of $\ZZ_n$ and edges given as follows. We join $x\in X_1$ to $x\in X_2$ by an edge of colour $0$, and we join $x\in X_i$ to $x+a\in Y$ by an edge of colour $(a,i)$ for $i=1,2$. Clearly, $G$ has $\Theta(n^{1+1/k})$ triangles. We claim that this graph contains no rainbow $C_{2k}$ if $k$ is odd and no rainbow $C_{2k+1}$ if $k$ is even. Indeed, assume that $k$ is odd an there is a rainbow $C_{2k}$. Then it must be of the form $x_1y_1x_2y_2\dots x_ky_k x_1$ with $y_j\in Y$ and $x_i\in X_1\cup X_2$. Then we get a representation $0=a_1-b_1+a_2-b_2+\dots+a_k-b_k$ with $a_i, b_j\in A$ by letting $a_i=y_i-x_i$, $b_i=y_i-x_{i+1}$ (where $x_{k+1}=x_1$). So the $a_i$ must be a permutation of the $b_j$. But $k$ is odd, so we have $|\{x_1,\dots,x_k\}\cap X_1|\not =|\{x_1,\dots,x_k\}\cap X_2|$, and hence there exist $i$ and $j$ such that $a_i=b_j$ and $x_i, x_{j+1}$ are in the same vertex class $X_l$. But then $c(x_iy_i)=c(y_jx_{j+1})$, so the cycle is not rainbow, giving a contradiction. The case when $k$ is even and $G$ contains a rainbow $(2k+1)$-cycle is similar.
\end{proof}

\bibliography{Bibliography}
\bibliographystyle{abbrv}	
	
\end{document}